\documentclass[a4,11pt]{article}
\usepackage[latin1]{inputenc}
\usepackage{epsfig}
\usepackage{amsmath}
\usepackage{amssymb}

\usepackage{amsthm}
\usepackage{color} 
\usepackage{mathrsfs}

\oddsidemargin 2.0mm  %
\evensidemargin 2.0mm %
\topmargin -20mm      %
\textheight 240mm     %
\textwidth 160mm      %

\DeclareMathOperator{\arccot}{arccot}

\newtheorem{theo}{Theorem}[section]
\newtheorem{remark}{Remark}[section]

\newtheorem{coro}{Corollary}[theo]

\newcommand{\T}{\mathbb{T}}
\newcommand{\RR}{R}
\newcommand{\QQ}{Q}
\newcommand{\PP}{\mathcal{P}}
\newcommand{\RUC}{\mathcal{R}}

\DeclareMathAlphabet{\mathpzc}{OT1}{pzc}{m}{it}

\newcommand{\aR}{\mathfrak{a}}
\newcommand{\bR}{\mathfrak{b}}
\newcommand{\cR}{\mathfrak{c}}

\newcommand{\FFa}{\mathpzc{M}}
\newcommand{\FFb}{\mathpzc{N}}

\renewcommand{\Re}{\mathop{\rm Re}}
\renewcommand{\Im}{\mathop{\rm Im}}
\newcommand{\const}{\mathop{\rm const}}

\def\qed{\rule{1.0ex}{1.0ex} \medskip \medskip}

\def \dsp {\displaystyle}

%%%%%%%%%%%%%%%%%%%%%%%%%

\def\XXint#1#2#3{{\setbox0=\hbox{$#1{#2#3}{\int}$}
     \vcenter{\hbox{$#2#3$}}\kern-.5\wd0}}

%%%%%%%%%%%%%%%%%%%%%%%%%

%%%%%%%%%%%%%%%%%%%%%%%%%%%%%%%%%%%%%%%%
% For Continued Fractions
\def\downbar#1{
\setbox10=\hbox{$#1$}
            \dimen10=\ht10 \advance\dimen10 by 2.5pt
            \ifdim \dimen10<15pt %equals approximately 0.5cm
               \advance\dimen10 by -0.5pt
               \dimen11=\dimen10
               \advance\dimen10 by 2.5pt
               \lower \dimen11
            \else \lower \ht10 \fi
            \hbox {\hskip 1.5pt \vrule height \dimen10 depth \dp10}}
\def\upbar#1{
\setbox10=\hbox{$#1$}
            \dimen10=\ht10 \advance\dimen10 by \dp10 \advance\dimen10 by 2.5pt
            \ifdim \dimen10<15pt %equals approximately 0.5cm
               \advance\dimen10 by 2pt \fi
            \aRise 2.5pt \hbox {\hskip -1.5pt \vrule height \dimen10}}

%%%%%%%%%%%%%%%%%%%%%%%%%%%%%%%%%%%%%%%%%

\begin{document}

\title{\Large{Complementary Romanovski-Routh polynomials: From orthogonal polynomials on the unit circle to Coulomb wave functions}\thanks
{This work was started while the first and the third authors were visiting Shanghai Jiao Tong University in the fall of 2016. Subsequently, this work was developed as part of the Ph.D. thesis of the second author, for which she had the grant 2017/04358-8 from FAPESP of Brazil. }}
\author
{
 {A. Mart\'{i}nez-Finkelshtein$^{a}$, L.L. Silva Ribeiro$^{b}$,  A. Sri Ranga$^{b}$\thanks{ranga@ibilce.unesp.br (corresponding author)}  and M. Tyaglov$^{c}$} \\[1ex]
  {\small $^{a}$Departamento de Matem\'{a}ticas, Universidad de Almer\'{i}a,}\\
  {\small 04120 Alamer\'{i}a, Espanha.}\\[1ex]
  {\small $^{b}$DMAp, IBILCE, UNESP - Universidade Estadual Paulista,} \\
  {\small 15054-000, S\~{a}o Jos\'{e} do Rio Preto, SP, Brazil.}\\[1ex]
  {\small $^{c}$School of Mathematical Sciences, Shanghai Jiao Tong University,} \\
  {\small Shanghai, China}\\[1ex]
}
%\date{ }

\maketitle

\thispagestyle{empty}

\begin{abstract}
   We consider properties and applications of a sequence of polynomials known as complementary Romanovski-Routh  polynomials (CRR polynomials for short). These polynomials, which follow  from the Romanovski-Routh  polynomials or  complexified Jacobi polynomials, are known to be useful objects in the studies of the one-dimensional Schr\"{o}dinger equation and also the wave functions of quarks. One of the main results of this paper is to  show how the CRR-polynomials are related to a special class of orthogonal polynomials on the unit circle. As another main result, we have established their connection to a class of functions which are related to a subfamily of Whittaker functions that includes those associated with the Bessel functions and the regular Coulomb wave functions. An electrostatic interpretation for the zeros of CRR-polynomials is also considered.  
\end{abstract}

{\noindent}Keywords: Romanovski-Routh polynomials, Second order differential equations, Orthogonal polynomials on the unit circle, Para-orthogonal polynomials, Coulomb wave functions. \\

{\noindent}2010 Mathematics Subject Classification: 42C05, 33C45, 33C47.

%%%%%%%%%%%%%%%%%%%%%%%%%%%%%%%%%%%%%%%%%%%%%%%%%%
\setcounter{equation}{0}
\section{Introduction}  \label{Sec-Intro}

The Romanovski-Routh polynomials are defined by the Rodrigues formula 
\begin{equation} \label{Eq-RR-Rodrigues}
    \RR_{n}^{(\alpha, \beta)}(x) = \frac{1}{\omega^{(\alpha, \beta)}(x)} \frac{d^{n}}{d x^{n}} \big[\omega^{(\alpha, \beta)}(x) (1+x^2)^{n}\big], \quad n \geq 1,  
\end{equation} 
(see \cite{Nikiforov-Uvarov-Book1988}), where $\omega^{(\alpha, \beta)}(x) = (1+x^2)^{\beta-1} (e^{-\arccot x})^{\alpha}$. These polynomials,   first appeared in   Routh \cite{Routh-PLMS1984},  were  rediscovered  by Romanovski \cite{Romanovski-AcadSciParis1929} in his work regarding probability distributions.  They  are found to be solutions of the second order differential equations 
\begin{equation} \label{Eq-DifEq-RR-Polynomials}
    (1+x^2) y^{\prime\prime}  + (2\beta x + \alpha) y^{\prime} - n(2\beta + n - 1) y = 0, \quad n \geq 1.  
\end{equation}
Since $\RR_{n}^{(\alpha, \beta)}(x) =  (-2i)^{n} n!\,P_{n}^{(\beta-1+\frac{i}{2}\alpha, \, \beta-1-\frac{i}{2}\alpha)} (ix)$, $ n \geq 0$, 
the polynomials $\RR_{n}^{(\alpha, \beta)}$ are also known as complexified Jacobi plynomials. The above connection formula between the polynomials $\RR_{n}^{(\alpha, \beta)}$ and the Jacobi polynomials $P_n^{(a,b)}$ can be directly verified from \eqref{Eq-RR-Rodrigues} and the Rodrigues formula (see, for example, \cite[Eq.\,(4.2.8)]{Ismail-Book2005}) for the Jacobi polynomials.  

The Romanovski-Routh polynomials $\RR_{n}^{(\alpha, \beta)}$ are not orthogonal polynomials in the usual sense. When $\beta$ is chosen to be negatively large enough (see, for example, \cite{RapWebCastKirc-CEJP2007}) then there holds a finite orthogonality. Precisely, 
\[
    \int_{-\infty}^{\infty} \RR_{n}^{(\alpha, \beta)}(x)\RR_{m}^{(\alpha, \beta)}(x)  \omega^{(\alpha, \beta)}(x)dx = 0 \quad \mbox{for} \quad  m \neq n, 
\]
only when $m+n -1 < - 2\beta$.

In this paper we will focus on a study of the so called {\em complementary Romanovski-Routh polynomials} (CRR polynomials for short). These polynomials  were defined in Weber \cite{Weber-CEJM-vol3} by considering a variation of the Rodrigues formula \eqref{Eq-RR-Rodrigues}. From the definition given in Weber \cite{Weber-CEJM-vol3}, the CRR  polynomials  are 
\begin{equation}\label{Eq-Q-Rodirigues}
     \QQ_{n}^{(\alpha, \beta)}(x) = \frac{(1+x^2)^{n}}{\omega^{(\alpha, \beta)}(x)} \frac{d^n}{dx^{n}} \omega^{(\alpha, \beta)}(x), \quad n \geq 1.
\end{equation}
The idea of looking at such complementary polynomials started in the work \cite{Weber-CEJM-vol2} of the same author.

As already observed in  \cite{RapWebCastKirc-CEJP2007}, one can  easily verify that $\QQ_{n}^{(\alpha, \beta)}(x) = \RR_{n}^{(\alpha, \beta-n)}(x)$, $n \geq 1$. 
Moreover, as shown also in \cite{RapWebCastKirc-CEJP2007} and \cite{Weber-CEJM-vol3}, the polynomials $\QQ_{n}^{(\alpha, \beta)}$  satisfy the three term recurrence formula 
\begin{equation} \label{Eq-Special-R2Type-RR-Weber}
    \QQ_{n+1}^{(\alpha, \beta)}(x) = [\alpha + 2(\beta-n-1)x]\QQ_{n}^{(\alpha, \beta)}(x) - n(-2\beta+n+1) (1+x^2) \QQ_{n-1}^{(\alpha, \beta)}(x), 
\end{equation} 
for $n \geq 1$, with $\QQ_{0}^{(\alpha, \beta)}(x) = 1$ and $\QQ_{1}^{(\alpha, \beta)}(x) = \alpha + 2(\beta-1)x$.  
It is also not difficult to verify from the differential equation \eqref{Eq-DifEq-RR-Polynomials} that 
\begin{equation}  \label{Eq-Hyper-Q}
    \QQ_{n}^{(\alpha, \beta)}(x) = (-2i)^{n}\big(\beta-n+i\frac{\alpha}{2}\big)_{n}\, _2F_1\Big(-n,2\beta-1-n;\,\beta-n+i\frac{\alpha}{2};\,\frac{1-ix}{2}\Big). 
\end{equation}

The CRR polynoimals are known to be (see \cite{RapWebCastKirc-CEJP2007}) important ingredients  in some studies concerning wave functions of quarks in accord with QCD (quantum chromodynamics) quark-gluon dynamics.  Moreover,  these  polynomials also play an important role in the studies of (one-dimensional) Schr\"{o}dinger  equation with hyperbolic Scarf potential.  

The main objective in the present work is to consider some  further properties  of these CRR  polynomials.  However, for convenience, we will view the CRR polynomials  using the modified  notation  $\PP_{n}(b; x)$, where $b = \lambda + i \eta$  and 
\begin{equation} \label{Eq-Modified-Notation}
    \PP_{n}(b; x) = \frac{(-1)^n }{2^n(\lambda)_{n}}\, \QQ_{n}^{(2\eta, -\lambda+1)}(x) = \frac{(-1)^n }{2^n(\lambda)_n} \RR_{n}^{(2\eta, -n-\lambda+1)}(x), \quad n \geq 1.
\end{equation}
We will also assume that $\lambda = \Re(b) > 0$ and  refer to the polynomials $\PP_{n}(b; .)$ as modified CRR polynomials or simply CRR polynomials. 

One of the reasons for adopting the notation $\PP_{n}(b; .)$, instead of $P_{n}^{(\lambda, \eta)}$ for example, is to avoid confusion with the notation used for Jacobi polynomials. 

The notation  $\{\PP_{n}(b;.)\}_{n \geq 0}$ is also somewhat interesting  to work with.   For example, in the three term recurrence formula \eqref{Eq-TTRR-for-CRR-polys} for $\{\PP_{n}(b;.)\}_{n \geq 0}$ given below, the sequence of coefficients $\{d_{n+1}^{(b)}\}_{n \geq 1}$ is exactly the same as the sequence of coefficients that appear in the three term recurrence formula for the monic Gegenbauer (i.e., ultrasperical) polynomials $\{\hat{C}_{n}^{(\lambda)}\}_{n\geq 0}$.  

In the following theorem  we have gathered  some of the basic properties of the CRR polynomials. 

\begin{theo} \label{Thm-Basic-Relations} For $b = \lambda + i \eta$, $\lambda > 0$, the complementary Romanovski-Routh polynomials $\PP_{n}(b;.)$ can be given by the hypergeometric expression 
\begin{equation*} \label{Eq-Pn-Hypergeometric}
     \PP_{n}(b;x) = \frac{(x-i)^{n}}{2^{n}} \frac{(2\lambda)_{n}}{(\lambda)_{n}}  \, _2F_1\Big(-n,b;\,b+\bar{b};\,\frac{-2i}{x-i}\Big), \quad n \geq 1.
\end{equation*}
They satisfy the three term recurrence 
\begin{equation} \label{Eq-TTRR-for-CRR-polys}
     \PP_{n+1}(b;x) = (x - c_{n+1}^{(b)}) \PP_{n}(b;x)  - d_{n+1}^{(b)} (x^2 + 1) \PP_{n-1}(b;x), \quad n \geq 1,
\end{equation}
with \ $\PP_0(b;x) = 1$ and  $\PP_{1}(b;x) = x - c_1^{(b)}$, where 
\begin{equation} \label{Coeffs-TTRR-for-Special-Pn}
    c_{n}^{(b)} = \frac{\eta}{\lambda+n-1} \quad \mbox{and} \quad d_{n+1}^{(b)} = d_{n+1}^{(\lambda)} = \frac{1}{4}\frac{n (2\lambda + n -1)}{(\lambda+n-1)(\lambda+n)}, \quad n \geq 1.
\end{equation}
Moreover, if $\lambda > 1/2$ then they also satisfy the orthogonality
\begin{equation} \label{Eq-Main-Orthogonality}
   \int_{-\infty}^{\infty} x^{m} \frac{\PP_{n}(b;x)}{(1+x^2)^n} \, \nu^{(\lambda, \eta)}(x) dx = \gamma_{n}^{(\lambda)} \delta_{m,n}, \quad m=0,1,\ldots, n,
\end{equation}
where 
\[
    \nu^{(\lambda, \eta)}(x) = \frac{2^{2\lambda-1} |\Gamma(b)|^2}{\Gamma(2\lambda-1)}   \frac{e^{\eta \pi}}{2\pi} \omega^{(2\eta, -\lambda+1)}(x) = \frac{2^{2\lambda-1} |\Gamma(b)|^2}{\Gamma(2\lambda-1)}   \frac{e^{\eta \pi}}{2\pi} \frac{(e^{-\arccot x})^{2\eta}}{(1+x^2)^{\lambda}}. 
\]
Here, $\gamma_{0}^{(\lambda)}  = \int_{-\infty}^{\infty} \nu^{(\lambda, \eta)}(x) dx = 1$ and $\gamma_{n}^{(\lambda)} = (1-\mathcal{L}_{n}^{(\lambda)})\gamma_{n-1}^{(\lambda)}$, $n \geq 1$, with  
\begin{equation} \label{Eq-maximal-ParamSeq}
     \mathcal{L}_{n}^{(\lambda)} = \frac{1}{2}\frac{2\lambda +n -2}{\lambda+n-1},  \quad n \geq 1 .
\end{equation}
\end{theo}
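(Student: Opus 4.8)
The three assertions have rather different characters, so I would treat them separately, using throughout the conversion $\QQ_n^{(2\eta,-\lambda+1)}(x) = (-1)^n 2^n (\lambda)_n\,\PP_n(b;x)$ from \eqref{Eq-Modified-Notation}. The recurrence is purely algebraic: substituting $\alpha = 2\eta$, $\beta = -\lambda+1$ into \eqref{Eq-Special-R2Type-RR-Weber} gives $\QQ_{n+1}^{(2\eta,-\lambda+1)} = [2\eta - 2(\lambda+n)x]\QQ_n^{(2\eta,-\lambda+1)} - n(2\lambda+n-1)(1+x^2)\QQ_{n-1}^{(2\eta,-\lambda+1)}$. Dividing by $(-1)^{n+1}2^{n+1}(\lambda)_{n+1}$ and using $(\lambda)_{n+1} = (\lambda+n)(\lambda)_n$, the coefficient of $\PP_n$ collapses to $x - \eta/(\lambda+n)$ and that of $(1+x^2)\PP_{n-1}$ to $-\tfrac14 n(2\lambda+n-1)/[(\lambda+n-1)(\lambda+n)]$, which are exactly $x - c_{n+1}^{(b)}$ and $-d_{n+1}^{(b)}$ from \eqref{Coeffs-TTRR-for-Special-Pn}; the initial data $\PP_0 = 1$, $\PP_1 = x - \eta/\lambda$ follow from $\QQ_0 = 1$ and $\QQ_1^{(2\eta,-\lambda+1)} = 2\eta - 2\lambda x$. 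Since $c_{n}^{(b)}$ and $d_{n+1}^{(b)}$ are real, this recurrence also shows that $\PP_n(b;\cdot)$ has real coefficients, a fact I shall reuse.

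For the hypergeometric form I would start from \eqref{Eq-Hyper-Q}. With $\alpha = 2\eta$, $\beta = -\lambda+1$ the denominator parameter is $\beta - n + i\alpha/2 = -(\lambda+n-1)+i\eta$, whose Pochhammer symbol is $(-1)^n(\bar b)_n$, and the numerator parameters are $-n$ and $-(2\lambda+n-1)$; after simplifying the scalar factor $(-1)^n(-2i)^n/2^n = i^n$ one gets $\PP_n = (-i)^n(\bar b)_n(\lambda)_n^{-1}\,{}_2F_1(-n, -(2\lambda+n-1); -(\lambda+n-1)+i\eta; (1-ix)/2)$. Reversing the order of summation in this terminating ${}_2F_1$ (the identity ${}_2F_1(-n,p;q;z) = \frac{(p)_n}{(q)_n}(-z)^n\,{}_2F_1(-n,1-q-n;1-p-n;1/z)$) sends the denominator parameter to $2\lambda$ and the argument to $2(1+ix)/(1+x^2)$, and, after collecting the Pochhammer factors via $(-(\lambda+n-1)+i\eta)_n = (-1)^n(\bar b)_n$ and $(-(2\lambda+n-1))_n = (-1)^n(2\lambda)_n$, it produces precisely the complex conjugate of the claimed expression. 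Reality of $\PP_n$ then gives equality. The only subtlety here is the bookkeeping of the prefactors.

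The orthogonality \eqref{Eq-Main-Orthogonality} is the substantive part, and its engine is the Rodrigues formula \eqref{Eq-Q-Rodirigues}. Writing $\omega = \omega^{(2\eta,-\lambda+1)}$ and combining \eqref{Eq-Q-Rodirigues} with \eqref{Eq-Modified-Notation} gives $\PP_n(b;x)/(1+x^2)^n = \frac{(-1)^n}{2^n(\lambda)_n}\,\omega(x)^{-1}\omega^{(n)}(x)$. Since $\nu^{(\lambda,\eta)} = C\,\omega$ with $C = \frac{2^{2\lambda-1}|\Gamma(b)|^2}{\Gamma(2\lambda-1)}\frac{e^{\eta\pi}}{2\pi}$, the integrand in \eqref{Eq-Main-Orthogonality} equals $\frac{(-1)^n C}{2^n(\lambda)_n}\,x^m\,\omega^{(n)}(x)$, and I would integrate by parts $n$ times. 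The boundary terms produced at every stage are constant multiples of $x^{m-j}\omega^{(n-1-j)}(x)$; because $\omega(x)\sim|x|^{-2\lambda}$ as $|x|\to\infty$ and each differentiation of $\omega$ lowers the decay exponent by one, these behave like $|x|^{m-2\lambda-n+1}$, which vanishes at $\pm\infty$ precisely when $m\le n$ and $2\lambda > 1$. This is exactly where the hypothesis $\lambda > 1/2$ enters. With the boundary terms gone, the integral reduces to $\frac{(-1)^nC}{2^n(\lambda)_n}(-1)^m m!\int \omega^{(n-m)}\,dx$: for $m < n$ the remaining integral is $[\omega^{(n-m-1)}]_{-\infty}^{\infty} = 0$, giving orthogonality, while for $m = n$ it collapses to $\frac{n!}{2^n(\lambda)_n}\,C\int_{-\infty}^{\infty}\omega\,dx$.

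It remains to evaluate $\int_{-\infty}^{\infty}\omega\,dx$ and to reconcile the two descriptions of $\gamma_n^{(\lambda)}$. The substitution $x = \cot\phi$, $\phi\in(0,\pi)$, turns $\int_{-\infty}^{\infty}\omega\,dx$ into $\int_0^\pi e^{-2\eta\phi}\sin^{2\lambda-2}\phi\,d\phi$, and the classical evaluation of $\int_0^\pi e^{iax}\sin^{\nu-1}x\,dx$ (with $\nu = 2\lambda-1$ and $a = 2i\eta$, so that the arguments of the resulting Beta function become $b$ and $\bar b$) yields $\frac{\pi e^{-\eta\pi}\Gamma(2\lambda-1)}{2^{2\lambda-2}|\Gamma(b)|^2} = 1/C$. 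Hence $C\int\omega\,dx = 1$, which both verifies $\gamma_0^{(\lambda)} = 1$ and gives the value $n!/(2^n(\lambda)_n)$ at $m=n$. Finally, since $1 - \mathcal{L}_n^{(\lambda)} = n/[2(\lambda+n-1)]$ by \eqref{Eq-maximal-ParamSeq}, the recursion $\gamma_n^{(\lambda)} = (1-\mathcal{L}_n^{(\lambda)})\gamma_{n-1}^{(\lambda)}$ telescopes to $\gamma_n^{(\lambda)} = n!/(2^n(\lambda)_n)$, matching the computed value. I expect the main obstacle to be the rigorous control of the boundary terms (the decay estimates for $\omega^{(k)}$ and the sharp use of $\lambda > 1/2$), with the correct identification of the normalizing integral as the only other genuinely nontrivial step.
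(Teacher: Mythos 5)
Your proposal is correct, but it takes a genuinely different route from the paper, most visibly for the orthogonality. The paper proves everything by passing to the unit circle: under the Cayley transform \eqref{Eq-Transformation-Rn-Pn} the polynomials $\PP_n(b;\cdot)$ become $\RUC_n(b;\cdot)$, which are identified through the transformed differential equation with the hypergeometric polynomials of \cite{ASR-PAMS2010}; the recurrence and, crucially, the orthogonality \eqref{Eq-Main-Orthogonality} are then imported from known facts about the $\RUC_n$ as para-orthogonal/kernel polynomials of the measure $\mu^{(b-1)}$ (with $\nu^{(\lambda,\eta)}(x)\,dx = -d\mu^{(b-1)}(\zeta)$), and $\gamma_n^{(\lambda)}$ is pinned down recursively via the chain-sequence identity \eqref{Eq-Max-Parameter} together with a computation of $\gamma_1^{(\lambda)}$ from the moments $\mu^{(b-1)}_{\pm1}$. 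You instead stay entirely on the real line: the recurrence by substitution into Weber's formula \eqref{Eq-Special-R2Type-RR-Weber} (an alternative the paper itself acknowledges as easy); the hypergeometric form by reversing the terminating series in \eqref{Eq-Hyper-Q} and invoking the reality of $\PP_n(b;\cdot)$ --- legitimate because you establish real coefficients from the recurrence first, whereas the paper suggests two Pfaff transformations; and the orthogonality by the classical Rodrigues/integration-by-parts argument, using $\omega^{(k)}(x)=O(|x|^{-2\lambda-k})$ (which indeed holds, e.g.\ because \eqref{Eq-Q-Rodirigues} gives $\omega^{(k)}=\QQ_k^{(2\eta,-\lambda+1)}\,\omega\,(1+x^2)^{-k}$ with $\deg \QQ_k\le k$), so that the boundary terms vanish exactly when $\lambda>1/2$. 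Your normalization also checks out: $x=\cot\phi$ converts $\int\omega\,dx$ into $\int_0^\pi e^{-2\eta\phi}\sin^{2\lambda-2}\phi\,d\phi = \pi e^{-\eta\pi}\Gamma(2\lambda-1)/\big(2^{2\lambda-2}|\Gamma(b)|^2\big)=1/C$, and your closed form $\gamma_n^{(\lambda)}=n!/\big(2^n(\lambda)_n\big)$ agrees with the paper's recursion since $1-\mathcal{L}_n^{(\lambda)}=n/\big(2(\lambda+n-1)\big)$ --- in fact this is slightly more explicit than the paper, which only determines $\gamma_n^{(\lambda)}$ through the recursion after verifying $\gamma_1^{(\lambda)}=(2\lambda)^{-1}$. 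What your approach buys is a self-contained, elementary proof in which the role of the hypothesis $\lambda>1/2$ is completely transparent; what it forgoes is the structural content that is the paper's actual point, namely the identification of $\RUC_n(b;\cdot)$ as para-orthogonal polynomials \eqref{Eq-as-ParaOrth} on $\T$ and of $\{\mathcal{L}_{n}^{(\lambda)}\}$ as the maximal parameter sequence of the chain sequence $\{d_{n+1}^{(\lambda)}\}$, which the unit-circle proof delivers as a by-product. The only blemishes are cosmetic: the scalar bookkeeping sentence in your hypergeometric step conflates the factor $i^n$ with the final $(-i)^n$ (the displayed formula is nonetheless right), and ``precisely when $m\le n$ and $2\lambda>1$'' should read ``whenever,'' since the boundary-term exponent $m-2\lambda-n+1$ is negative under weaker conditions for $m<n$.
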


\begin{remark} Here, we have assumed $\arccot(x)$ to be a continuous function that decreases from $\pi$ to $0$ as $x$ increases from  $-\infty$ to $\infty$. 
\end{remark}

Part of the proof of Theorem \ref{Thm-Basic-Relations} directly follows from  some of the known results stated above.  However, we have given a proof of this theorem in Section  \ref{Sec-Conect-OPUC}  which follows as a consequence of some recently known results on orthogonal polynomials on the unit circle. 

From the three term recurrence \eqref{Eq-TTRR-for-CRR-polys} for  $\{\PP_{n}(b;.)\}_{n \geq 0}$ one can easily observe that 
the leading coefficient of $\PP_{n}(b;.)$ is positive.  Precisely, if $\PP_{n}(b;x) = \mathfrak{p}_{n}^{(b)} x^{n} + \mbox{lower order terms}$, then $\mathfrak{p}_{0}^{(b)} = 1$ and $\mathfrak{p}_{n}^{(b)} = (1- \ell_{n}^{(\lambda)}) \mathfrak{p}_{n-1}^{(b)}$, $ n \geq 1$, where 
\begin{equation} \label{Eq-minimal-ParamSeq}
   \quad 1- \ell_{n}^{(\lambda)} = \frac{(2\lambda)_{n}}{2^n(\lambda)_n}, \quad \ n \geq 1.
\end{equation}

The sequence $\{d_{n+1}^{(\lambda)}\}_{n \geq 1}$ in \eqref{Eq-TTRR-for-CRR-polys} is a so called positive chain sequence with the sequence $\{\ell_{n+1}^{(\lambda)}\}_{n \geq 0}$, as given above,  its minimal parameter sequence. That is,
\[
 (1-\ell_{n}^{(\lambda)}) \ell_{n+1}^{(\lambda)} = d_{n+1}^{(\lambda)}, \  n \geq 1, \quad \mbox{with} \quad  \ell_{1}^{(\lambda)} = 0 \ \  \mbox{and} \ \ 0 < \ell_{n}^{(\lambda)} < 1,  \   n \geq 2.  
\]
Any sequence $\{g_{n+1}\}_{n \geq 0}$ such that $ (1- g_{n})g_{n+1}= d_{n+1}^{(\lambda)}$, $n \geq 1$, with $ 0  \leq g_{1} < 1$ and $0  < g_{n} < 1$ for $n \geq 2$,  
can be referred to as a parameter sequence of the positive chain sequence $\{d_{n+1}^{(\lambda)}\}_{n \geq 1}$. When $1/2 \geq \lambda > 0$, the sequence $\{\ell_{n+1}^{(\lambda)}\}_{n \geq 0}$ is the only parameter sequence of $\{d_{n+1}^{(\lambda)}\}_{n \geq 1}$. 

When $\lambda > 1/2$,  the sequence  $\{\mathcal{L}_{n+1}^{(\lambda)}\}_{n \geq 0}$ given by \eqref{Eq-maximal-ParamSeq} is also such that 
\begin{equation} \label{Eq-Max-Parameter}
   (1- \mathcal{L}_{n}^{(\lambda)})\mathcal{L}_{n+1}^{(\lambda)} = d_{n+1}^{(\lambda)}, \   n \geq 1, \quad \mbox{with} \quad 0 < \mathcal{L}_{n}^{(\lambda)} < 1, \ n \geq 1. 
\end{equation}
Hence, when $\lambda > 1/2$, the sequence  $\{\mathcal{L}_{n+1}^{(\lambda)}\}_{n \geq 0}$ is also a parameter sequence of the positive chain sequence $\{d_{n+1}^{(\lambda)}\}_{n \geq 1}$.  It turns out $\{\mathcal{L}_{n+1}^{(\lambda)}\}_{n \geq 0}$ is the so called maximal parameter sequence of this positive chain sequence. For definitions and for many of the basic results concerning positive chain sequences we refer to \cite{Chihara-Book}. 

We can verify from \eqref{Eq-DifEq-RR-Polynomials} that the differential equation satisfied by $\PP_{n}(b;.)$ is 
\begin{equation} \label{Eq-DifEq}
   A(x)\,\PP_{n}^{\prime\prime}(b;x) - 2B(n,\lambda, \eta;x)\,\PP_{n}^{\prime}(b;x) + C(n,\lambda) \,\PP_{n}(b;x) = 0,
\end{equation}
where $A(x) = x^2 +1$, $B(n,\lambda, \eta;x) = (\lambda+n-1)x - \eta$,  \ $C(n,\lambda) = n(n-1+2\lambda)$.

Many other properties of $\PP_n(b;.)$ (i.e. of the polynomials $\QQ_n^{(\alpha, \beta)}$) have been explored in  \cite{Weber-CEJM-vol3} and \cite{RapWebCastKirc-CEJP2007}. Perhaps one of the most interesting and simplest of these properties is 
\begin{equation} \label{Eq-AppellProp}
   \frac{d\, \PP_{n}(b;x)}{dx} = n (1-\ell_{n}^{(\lambda)}) \PP_{n-1}(b;x), \quad n \geq 1,
\end{equation}
which can be verified from \eqref{Eq-Hyper-Q}. With this property, clearly  \eqref{Eq-TTRR-for-CRR-polys} and \eqref{Eq-DifEq} are equivalent statements. The results given in Section \ref{Sec-Appell}   of this paper are developed as a consequence of the property \eqref{Eq-AppellProp}.  

The contents  in the remaining sections of this paper are  as follows.  \\[-3ex]

\begin{itemize}

\item In Section \ref{Sec-Conect-OPUC}, as one of the main results of this paper, we show how the CRR polynomials $\PP_{n}(b; .)$ are related to a special class of orthogonal polynomials on the unit circle. A proof of Theorem \ref{Thm-Basic-Relations} is also given in this section. 

\item In Section \ref{Sec-Further-Prop-CRR}, we also give an eletrostatic interpretation for the zeros of CRR polynomials.

\item  Another main contribution of this paper, considered in  Section \ref{Sec-Appell}, is concerned with the generating function of the form  $e^{xw}\FFb(b;w)$ for the monic CRR polynomials. It turns out that the functions $\FFb(b;.)$ are closely related to the subfamily $M_{-i\eta, \lambda-1/2}$ of Whittaker functions  (see \cite{AndAskRoy-Book2000}). Thus, special cases of the function $\FFb(b;.)$ are also related to the Bessel functions and the regular Coulomb wave functions. We have referred to this subfamily of Whittaker functions as extended regular Coulomb wave functions.

\end{itemize}

%%%%%%%%%%%%%%%%%%%%%%%%%%%%%%%%%%%%%%%%%%%%%%%%%%
\setcounter{equation}{0}
\section{Orthogonal polynomials on the unit circle}  \label{Sec-Conect-OPUC} 

We now consider the connection the CRR polynomials $\PP_{n}(b;.)$ have with the  polynomials $\Phi_{n}(b;.)$ which are orthogonal on the unit circle with respect to the probability measure 
\begin{equation} \label{Eq-UC-ProbMeasure}
    d \mu^{(b)}(e^{i\theta}) = \frac{4^{ \mathcal{R}e(b)} |\Gamma(b+1)|^2}{ \Gamma(b+\bar{b}+1)}\,  \frac{1}{2\pi}  [e^{\pi-\theta}]^{\mathcal{I}m(b)} [\sin^2(\theta/2)]^{\mathcal{R}e(b)} d \theta.
\end{equation}
Observe that the above measure presents a Fischer-Hartwig type singularity. The  monic orthogonal polynomials $\Phi_n(b;.)$ and the associated orthogonality relation, which exist for $\lambda > -1/2$,   are explicitly  given by (see \cite{ASR-PAMS2010})  
\begin{equation}\label{Eq-OPUC-OrthoRelation}
  \begin{array}l
    \dsp  \Phi_{n}(b;z) = \frac{(2\lambda+1)_{n}}{(b+1)_{n}} \, _2F_1(-n,b+1;\,b+\bar{b}+1;\,1-z), \quad n \geq 0, 
  \end{array}
\end{equation}
 and,   for $n, m = 0, 1, 2 , \ldots$, 
\[
   \int_{0}^{2\pi}  \overline{\Phi_{m}(b;e^{i\theta})}\,\Phi_{n}(b;e^{i\theta})\, d \mu^{(b)}(e^{i\theta}) = \frac{(2\lambda+1)_{n}\,n!}{|(b+1)_{n}|^2} \delta_{m,n}.
\]

For general information on orthogonal polynomials on the unit circle we refer to the monographs  Szeg\H{o} \cite{Szego-book}, Simon \cite{Simon-Book-p1},  Simon \cite{Simon-Book-p2} and Ismail \cite{Ismail-Book2005}. 

Let the polynomials $\{\RUC_{n}(b;.)\}_{n \geq 0}$ be such that
\begin{equation} \label{Eq-Transformation-Rn-Pn}
     \RUC_{n}(b;\zeta) = \frac{2^{n}}{(x-i)^{n}} \PP_{n}(b;x), \quad  n \geq 0,
\end{equation}
where $\zeta  = (x+i)/(x-i)$ or equivalently $x = i(\zeta+1)/(\zeta-1)$. This transformation maps the real line $(-\infty, \infty)$ onto the cut unit circle $\T\backslash 1 := \{\zeta=e^{i\theta}\!\!: \, 0 < \theta < 2\pi \}$.  
With known results about the polynomials $\RUC_{n}(b;.)$ on the unit circle, which we will point out as needed,  we now look into a  proof of Theorem \ref{Thm-Basic-Relations}. 

\begin{proof}[Proof of Theorem \ref{Thm-Basic-Relations}]   

From \eqref{Eq-DifEq} and \eqref{Eq-Transformation-Rn-Pn}, it is not difficult to see that 
\begin{equation*} \label{Circle-DifEq}
   z(1-z)\frac{d^2 \RUC_{n}(b;\,z)}{dz^2} - [b+\bar{b} - (-n+b+1)(1-z)]\frac{d \RUC_{n}(b;\,z)}{dz} + n b \RUC_{n}(b;\,z) = 0,
\end{equation*}
for $n \geq 1$.  From this  differential equation  we can easily identify the polynomials $\RUC_{n}(b;\, .)$ to be 
\begin{equation} \label{Eq-Rn-HyperForm}
     \RUC_{n}(b;\, z) = \frac{(2\lambda)_{n}}{(\lambda)_{n}} \, _2F_1(-n,b;\,b+\bar{b};\,1-z), \quad n \geq 0. 
\end{equation}
This result, together with \eqref{Eq-Transformation-Rn-Pn}, gives the proof of the hypergeometric expression given in Theorem \ref{Thm-Basic-Relations}. However, the hypergeometric expression in Theorem \ref{Thm-Basic-Relations} can also be obtained from \eqref{Eq-Hyper-Q}  using two transformations of Pfaff, namely the ones given by (2.2.6) and (2.3.14) in  \cite{AndAskRoy-Book2000}. 

Now, if the three term recurrence in Theorem \ref{Thm-Basic-Relations} is true then one must also have    
\begin{equation} \label{TTRR-for-Special-Rm}
     \RUC_{n+1}(b;\, z) = \left[(1 + i\,c_{n+1}^{(b)})z + (1 - i\,c_{n+1}^{(b)})\right] \RUC_{n}(b;\, z) - 4 d_{n+1}^{(b)} z \RUC_{n-1}(b;\,z),
\end{equation}
for $n \geq 1$, with $\RUC_0(b;\, z) = 1$ and $\RUC_1(b;\, z) = (1 + i\,c_1^{(b)})z + (1 - i\,c_1^{(b)})$, where the coefficients $c_{n}^{(b)}$ and $d_{n+1}^{(b)}$ are as in \eqref{Coeffs-TTRR-for-Special-Pn}. 

From results given in \cite{Costa-Felix-ASR-JAT2013} and \cite{ASR-PAMS2010} we also find that the polynomials $\RUC_{n}(b;.)$ given by \eqref{Eq-Rn-HyperForm} actually satisfy the above three term recurrence.  Thus, from \eqref{Eq-Transformation-Rn-Pn} the three term recurrence relation in  Theorem \ref{Thm-Basic-Relations} is also verified. Once again, the three term recurrence relation in Theorem \ref{Thm-Basic-Relations} can also be easily derived from the three term recurrence relation \eqref{Eq-Special-R2Type-RR-Weber} for the polynomials $\QQ_{n}^{(2\eta, -\lambda+1)}$.  

The polynomials $\RUC_{n}(b;\cdot)$ (see \cite{ASR-PAMS2010}) are also the para-orthogonal  polynomials 
\begin{equation} \label{Eq-as-ParaOrth}
   \RUC_{n}(b;\, z) = \frac{(b)_{n}}{(\lambda)_{n}} \big[ z\Phi_{n-1}(b; z) + \frac{(\overline{b})_{n}}{(b)_{n}} \Phi_{n-1}^{\ast}(b; z)\big], \quad n \geq 1, 
\end{equation}
associated with the orthogonal polynomials on the unit circle $\Phi_{n}(b;.)$ with respect to the measure $\mu^{(b)}$ given above.

The polynomials $\RUC_{n}(b;\cdot)$ and the associated orthogonal polynomials $\Phi_{n}(b; \cdot)$, in addition to have been studied in \cite{ASR-PAMS2010}, they have been used as examples in a sequence of papers  \cite{Brac-ASR-Swami-ANM2016, CFB-MF-ASR-DOV-JAT2008, Castillo-Costa-ASR-Veronese-JAT2014, Costa-Felix-ASR-JAT2013, Dim-ASR-MN2013, Ismail-ASR-2016, MFinkelshtein-ASR-Veronese-2016},  without knowing anything about their connection to the CRR polynomials. The results obtained in \cite{Ismail-ASR-2016} are focused on the three term recurrence of the type \eqref{Eq-TTRR-for-CRR-polys} and the associated generalized eigenvalue problem (with these respect see also \cite{IsmailMasson-JAT1995} and \cite{Zhedanov-JAT1999}).  From what we can observe from \cite[p.\,304]{Askey-SzegoCP-1982}, the polynomials $\RUC_{n}(b;\cdot)$ and $\Phi_{n}(b; \cdot)$ were also have been observed as belonging to a class of hypergeometric biorthogonal polynomials. But, again in \cite{Askey-SzegoCP-1982}, no such connection  to the Romanovski-Routh polynomials has been mentioned. 

However, the connection between the  circular Jacobi polynomials  and a subfamily  of  the CRR polynomials  is somewhat known in the literature (see \cite{Witte-Forrester-NLinearity2000}). We recall that the circular Jacobi polynomials are the subclass of the polynomials $\Phi _{n}(b; \cdot)$ in which  $\Im(b) = \eta = 0$.  

Even though the polynomials $\RUC_{n}(b;.)$ and their associated CRR polynomials $\PP_{n}(b;.)$ are defined for $\Re(b) = \lambda > 0$,  the orthogonal polynomials $\Phi_{n}(b;.)$ themselves can be considered for $\lambda > -1/2$.   

We now consider the reproducing kernels  
\[
  \begin{array}{rl}
    K_{n}(b; z, w) 
    & = \dsp \frac{\overline{\varphi_{n+1}(b;w)}\,\varphi_{n+1}(b;z) - \overline{\varphi_{n+1}^{\ast}(b;w)}\varphi_{n+1}^{\ast}(b;z)}{\overline{w} z-1}, \quad n \geq 0.
  \end{array}
\]
Here, $\varphi_{n}(b;.)$ are the orthonormal version of $\Phi_{n}(b; \cdot)$.

The above kernels are also known as Christoffel-Darboux kernels or simply CD Kernels (see, for example, \cite{Simon-Book-p1}). For any fixed $w$, the kernel $K_{n}(b; z, w)$ is a polynomial in $z$ of degree $\leq n$ and, in particular, if $|w| \geq 1$ then it is a polynomials in $z$ of exact degree $n$. 

When $\lambda > 1/2$, the polynomials $\RUC_{n}(b;.)$ are also modified kernel polynomials of the orthogonal polynomials $\Phi_n(b-1;.)$, evaluated at $w = 1$. That is, 
\[
    \RUC_{n}(b;\, z) = \xi_{n}^{(b-1)}\,K_{n}(b-1; z, 1), \quad n \geq 0,
\]
where
$\xi_{0}^{(b-1)} = \int_{\T} d \mu^{(b-1)}(\zeta)$ and $\xi_{n}^{(b-1)} = \xi_{0}^{(b-1)} \prod_{j=1}^{n} (1-\mathcal{L}_{j}^{(\lambda)})$, $n \geq 1$. 
Here,  $\{\mathcal{L}_{n}^{(\lambda)}\}_{n \geq 1}$ is the sequence given by \eqref{Eq-maximal-ParamSeq} and, since the measure $\mu^{(b-1)}$ being a probability measure, $\xi_{0}^{(b-1)} = 1$.  
Known results on Kernel polynomials also give the orthogonality (see \cite{Costa-Felix-ASR-JAT2013})
\[
    \int_{\mathbb{T}} \zeta^{-k}\RUC_{n}(b;\zeta)  (1-\zeta^{-1}) d \mu^{(b-1)}(\zeta)   = 0,  \quad 0 \leq k \leq n-1. 
\] 
From this, by using the transformation \eqref{Eq-Transformation-Rn-Pn} we obtain the orthogonality  \eqref{Eq-Main-Orthogonality} 
in Theorem \ref{Thm-Basic-Relations},  where to be precise $\nu^{(\lambda, \eta)}(x) dx = -d \mu^{(b-1)}(\zeta)$.

Finally, to obtain the formula for  $\gamma_{n}^{(\lambda)}$ in Theorem \ref{Thm-Basic-Relations}, from  \eqref{Eq-TTRR-for-CRR-polys} we have 
\[
    \frac{x^{n-1}\PP_{n+1}(b;x)}{(x^2+1)^{n}}  = (x-c_{n+1}^{(b)})\frac{x^{n-1}\PP_{n}(b;x)}{(x^2+1)^{n}}  - d_{n+1}^{(\lambda)}\frac{x^{n-1}\PP_{n-1}(b;x)}{(x^2+1)^{n-1}},  
\]
for $n \geq 1$. Hence, integration with respect to $\nu^{(\lambda, \eta)}$ gives $\gamma_{n+1}^{(\lambda)} = \gamma_{n}^{(\lambda)} - d_{n+1}^{(\lambda)}\gamma_{n-1}^{(\lambda)}$, $n \geq 1$,  
which can be  written in the alternative form 
\[
   \frac{\gamma_{n}^{(\lambda)}}{\gamma_{n-1}^{(\lambda)}} \Big(1 - \frac{\gamma_{n+1}^{(\lambda)}}{\gamma_{n}^{(\lambda)}}\Big) = d_{n+1}^{(\lambda)}, \quad n \geq 1. 
\]
Thus from \eqref{Eq-Max-Parameter}, what one has to verify is $\gamma_{1}^{(\lambda)}/\gamma_{0}^{(\lambda)} = (1- \mathcal{L}_{1}^{(\lambda)}) = (2\lambda)^{-1}$. 
Clearly,  $\gamma_{0}^{(\lambda)} = \int_{-\infty}^{\infty}  \nu^{(\lambda, \eta)}(x) dx = \int_{\T} d\mu^{(b-1)}(\zeta) = 1$. However,  
\[
   \begin{array}{rl}
      \dsp \gamma_{1}^{(\lambda)} = \int_{-\infty}^{\infty} x\frac{\PP_{1}(b;x)}{(1+x^2)} \, \nu^{(\lambda, \eta)}(x) dx 
          & \dsp= \frac{1}{4}\int_{\T} \frac{\zeta+1}{\zeta} \RUC_{1}(b;\zeta) d \mu^{(b-1)}(\zeta) \\[2ex]
          &= (1 + i c_{1}^{(b)})\mu^{(b-1)}_{-1}  +2 + (1 - i c_{1}^{(b)})\mu^{(b-1)}_{1} . 
   \end{array}
\]
Thus, using the expression for $c_1^{(b)}$ in Theorem \ref{Thm-Basic-Relations} together with $\mu^{(b-1)}_{1} = \overline{\mu}^{(b-1)}_{-1} = (-b+1)/\overline{b}$, we find $\gamma_{1}^{(\lambda)}  = (2\lambda)^{-1}$. This completes the proof of Theorem \ref{Thm-Basic-Relations}.   \hfill
\end{proof}

%%%%%%%%%%%%%%%%%%%%%%%%%%%%%%%%%%%%%%%%%%%%%%%%%%
\setcounter{equation}{0}
\section{Electrostatic interpretation for the zeros of CRR polynomials }   \label{Sec-Further-Prop-CRR}

Consider the electrostatic field which obeys the logarithmic potential law composed of two fixed negative charges of size $\lambda_{m}$  at $i$ and $-i$,  and $m$ movable positive unit charges along the x-axis.  In addition, assume that  there is also a background energy filed of type $\arctan$ effecting the movable charges. To be precise, we consider the electrostatic field where the  energy $E = E(x_1, x_2, \ldots, x_{m})$ is given by 
\[
   E = \sum_{1 \leq j < i \leq m} \ln \frac{1}{|x_{j}- x_{i}|} -\frac{\lambda_{m}}{2} \sum_{j=1}^{m}\Big[ \ln \frac{1}{|x_{j} - i|} + \ln \frac{1}{|x_{j} + i|} \Big] - \eta \sum_{j=1}^{m} \arctan(x_{j}).   
\]
With $\lambda_n$ taken to be large enough, we need to find the set of locations of the movable charges that minimizes this energy. 

\begin{theo} 
Let $\lambda_{m} = \lambda+ m -1$. Then the set of values $x_1^{(m)}(b), x_2^{(m)}(b), \ldots, \linebreak x_{m}^{(m)}(b)$ that minimizes the the energy $E(x_1, x_2, \ldots, x_{m})$ are the zeros of $\PP_{m}(b;.)$.
\end{theo}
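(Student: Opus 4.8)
The plan is to follow the classical Stieltjes strategy: show that $E$ attains a global minimum at an interior configuration of distinct points, characterise that configuration through the vanishing of $\nabla E$, and then identify the resulting equilibrium system with the differential equation \eqref{Eq-DifEq}. First I would establish existence of the minimum by a coercivity argument. Writing $E = -\sum_{i<j}\ln|x_i-x_j| + \frac{\lambda_m}{2}\sum_j\ln(x_j^2+1) - \eta\sum_j\arctan(x_j)$, I note that the $\arctan$ term is bounded. A collision $x_i\to x_j$ drives the first sum to $+\infty$ while the rest stays bounded, so $E\to+\infty$ on the diagonal. For escape to infinity I would use the elementary bound $|x_i-x_j|\le\sqrt2\,(x_i^2+1)^{1/2}(x_j^2+1)^{1/2}$ to get $-\sum_{i<j}\ln|x_i-x_j|\ge -\tfrac{m-1}{2}\sum_j\ln(x_j^2+1)-\mathrm{const}$, whence $E\ge\frac{\lambda}{2}\sum_j\ln(x_j^2+1)-\mathrm{const}'$ because $\lambda_m=\lambda+m-1$. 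Since $\lambda=\Re(b)>0$, this forces $E\to+\infty$ as any $|x_j|\to\infty$, so $E$ is coercive and continuous on the open configuration space of distinct real tuples and attains a global minimum in the interior, where all the points are distinct.

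At such a minimum $\nabla E=0$. Computing $\partial E/\partial x_k$ (using $\ln|x_k\mp i|=\tfrac12\ln(x_k^2+1)$) yields the equilibrium system $\sum_{j\ne k}\frac{1}{x_k-x_j}=\frac{(\lambda+m-1)x_k-\eta}{x_k^2+1}=\frac{B(m,\lambda,\eta;x_k)}{A(x_k)}$ for $k=1,\dots,m$, with $A$ and $B$ exactly as in \eqref{Eq-DifEq}.

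To identify the solution, I form the monic polynomial $T(x)=\prod_{j=1}^m(x-x_j)$ at the equilibrium. The identity $\frac{T''(x_k)}{2T'(x_k)}=\sum_{j\ne k}\frac{1}{x_k-x_j}$ turns the equilibrium system into $A(x_k)T''(x_k)-2B(m,\lambda,\eta;x_k)T'(x_k)=0$ for every $k$. The polynomial $G:=A\,T''-2B\,T'$ has degree $m$ and vanishes at all $m$ zeros of $T$, so $G=cT$; comparing leading coefficients gives $c=-C(m,\lambda)$, i.e. $T$ satisfies precisely \eqref{Eq-DifEq} with $n=m$. That equation has a unique monic polynomial solution of degree $m$, since the difference of two such solutions would be a nonzero polynomial of degree $d\le m-1$ annihilated by the operator, which is impossible because the indicial values $d(d-1)-2(\lambda+m-1)d+C(m,\lambda)$ are nonzero for $0\le d\le m-1$ when $\lambda>0$. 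Hence $T$ is the monic normalization of $\PP_m(b;\cdot)$, and the minimizing configuration is exactly its zero set; in particular these zeros are real and simple, consistent with the fact from Section \ref{Sec-Conect-OPUC} that the para-orthogonal polynomials $\RUC_m(b;\cdot)$ have simple zeros on the unit circle.

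I expect the main obstacle to be the coercivity step: one must verify that the logarithmic attraction toward $\pm i$ carried by $\lambda_m$ genuinely dominates the mutual repulsion, so that the minimum stays in the interior. The choice $\lambda_m=\lambda+m-1$ is what makes the leftover confinement coefficient equal to $\frac{\lambda}{2}>0$, which is the precise reason the hypothesis ``$\lambda_m$ large enough'' is met. Once this is secured, the Stieltjes identification through \eqref{Eq-DifEq} and the uniqueness of its polynomial solution are routine.
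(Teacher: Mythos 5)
Your proof is correct and takes essentially the same route as the paper: the paper's argument is the same Stieltjes equilibrium identification, carried out on $F=e^{-2E}$ instead of on $E$ itself, leading to the same conclusion $s(x)=(1+x^2)r''(x)-2(\lambda_m x-\eta)r'(x)=\const\times r(x)$ and thence to \eqref{Eq-DifEq}. The only difference is that you rigorously fill in two steps the paper leaves implicit — the coercivity bound $|x_i-x_j|\le\sqrt{2}\,(x_i^2+1)^{1/2}(x_j^2+1)^{1/2}$ guaranteeing attainment of an interior minimum with distinct points (the paper only asserts boundedness of $F$ for $\lambda_m>m-1$), and the uniqueness of the monic degree-$m$ polynomial solution of \eqref{Eq-DifEq} via the nonvanishing of $(d-m)\bigl(d-(m-1+2\lambda)\bigr)$ for $0\le d\le m-1$ — both of which are sound.
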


\noindent {\bf Proof}. The problem of minimizing $E$ is equivalent to maximizing the positive function  
\[
    F = F(x_1, x_2, \ldots, x_{m}) = e^{-2E(x_1, x_2, \ldots, x_{m})}.  
\]
Clearly,
\[
    F = \prod_{1 \leq j < i \leq m}(x_{j} - x_{i})^2 \  \prod_{j=1}^{m} (1+x_{j}^2)^{-\lambda_{m}} \  \prod_{j=1}^{m} e^{2\eta \arctan{x_{j}}}.
\]
With $\lambda_{m} > m-1$, one can easily verify that $F$ is bounded from above. We write 
\[
   F = f_{m}(\hat{x}\backslash x_{k})\ \prod_{j \neq k} (x_{k}-x_{j})^2 \ \ (1+x_{k}^2)^{-\lambda_{m}}\ e^{2\eta \arctan{x_{k}}},
\]
where $f_{m}(\hat{x}\backslash x_{k})$ is the part of $F$ that is independent  of $x_{k}$. With  $r_{k}(x) = r(x)/\linebreak (x-x_{k})$, where $r(x) = \prod_{j=1}^{m}(x-x_{j})$,  we can also write 
\[
    F = f_{m}(\hat{x}\backslash x_{k})\ r_{k}^2(x_{k}) \  (1+x_{k}^2)^{-\lambda_{m}}\ e^{2\eta \arctan{x_{k}}},
\]
and our problem is now to find a polynomial $r(x)$ of degree $m$ that maximizes $F$. 

Differentiating $F$ with respect to $x_{k}$ gives 
\[ 
  \begin{array}{rl}
  \frac{\partial F}{\partial x_{k}} = \dsp & f_{m}(\hat{x}\backslash x_{k})  \Big[2r_{k}(x_{k})r_{k}^{\prime}(x_{k})\, (1+x_{k}^2)^{-\lambda_{m}}\, e^{2\eta \arctan{x_{k}}} \\[2ex]
  & \dsp \qquad \qquad  \quad  - 2\lambda_{m} x_{k}(1+x_{k}^2)^{-\lambda_{m}-1} \,r_{k}^2(x_{k})\,e^{2\eta \arctan{x_{k}}} \\[1ex]
  & \dsp \qquad  \qquad  \qquad \qquad + \frac{2\eta}{1+x_{k}^{2}}e^{2\eta \arctan{x_{k}}}\, r_{k}^2(x_{k})\, (1+x_{k}^2)^{-\lambda_{m}} \Big] \\[3ex]
  = & \dsp f_{m}(\hat{x}\backslash x_{k})\ r_{k}(x_{k}) (1+x_{k}^2)^{-\lambda_{m}-1}e^{2\eta \arctan{x_{k}}} \\[2ex] 
   & \dsp \qquad \qquad \qquad \times \Big[ 2(1+x_{k}^{2}) r_{k}^{\prime}(x_{k}) - 2(\lambda_{m} x_{k} - \eta) r_{k}(x_{k})  \Big]
   \end{array}
\]
Using $r^{\prime}(x_{k}) = r_{k}(x_{k})$ and $r^{\prime\prime}(x_{k}) = 2r_{k}^{\prime}(x_{k})$, we then have 
\[ 
 \begin{array}{rl}
   \frac{\partial F}{\partial x_{k}} & = \dsp f_{m}(\hat{x}\backslash x_{k})\ r_{k}(x_{k}) (1+x_{k}^2)^{-\lambda_{m}-1}e^{2\eta \arctan{x_{k}}} \\[1.5ex]
  & \dsp \qquad \qquad  
  \times  \big[ (1+x_{k}^{2}) r^{\prime\prime}(x_{k}) - 2(\lambda_{m} x_{k} - \eta) r^{\prime}(x_{k})  \big]  .
 \end{array}
\]
The expression $s(x)=(1+x^2) r^{\prime\prime}(x) - 2(\lambda_{m} x - \eta) r^{\prime}(x)$ is a polynomial of degree $m$.  We must choose $r(x)$ such that  at the maximum of $F$ the polynomial $s(x)$ also vanishes at the zeros of $r$. That is, $s(x) = \const \times r(x)$. 

Thus, if we take $\lambda_{m} = \lambda + m -1$, where $\lambda > 0$, then from the differential equation \eqref{Eq-DifEq} we find $r(x) = \const \times P_{m}(b;x)$ and $x_{k}$ are the zeros of $P_{m}(b;x)$. 
\hfill \qed

\vspace{-1ex}

%%%%%%%%%%%%%%%%%%%%%%%%%%%%%%%%%%%%%%%%%%%%%%%%%%
\setcounter{equation}{0}
\section{Generating functions }  \label{Sec-Appell}

Generating functions have been known to be an important tool in the theory of special functions.    The following  generating function for the CRR polynomials $\QQ_{n}^{(2\eta, -\lambda+1)}$ is given in \cite[Thm.\,1.9] {Weber-CEJM-vol3}:
\[
    \frac{(x^2+1)^{\lambda} e^{2\eta \arccot(x)}}{ \big[1 + [x + w(x^2 +1)]^{2}\big]^{\lambda} e^{2\eta \arccot[x + w(x^2 +1)]}} = \sum_{n=0}^{\infty} \ \QQ_{n}^{(2\eta, -\lambda+1)}(x) \frac{w^n}{n!}.
\]

From a simplification of the above left hand side  and then using \eqref{Eq-Modified-Notation} we can state the following result. 

\begin{theo} \label{Thm-GenFunc-1} For $b = \lambda +i \eta$ and  $\lambda > 0$, 
\[
    \frac{e^{2\eta \arccot(x)}}{[(xw - 1)^2 + w^2]^{\lambda}e^{2\eta \arccot[x - w(x^2 +1)]}}  =   \sum_{n=0}^{\infty}  (2\lambda)_{n} \widehat{\PP}_{n}(b; x) \frac{w^{n}}{n!},
\]
where $\widehat{\PP}_{n}(b;.)$ are the monic CRR polynomials given by 
\begin{equation} \label{Eq-Monic-CRR-Polys}
        \widehat{\PP}_{n}(b;x) = \frac{1}{\mathfrak{p}_{n}^{(b)}}\PP_{n}(b;x) = \frac{2^{n} (\lambda)_{n}}{(2\lambda)_{n}} \PP_{n}(b;x),  \quad n \geq 1.
\end{equation}
\end{theo}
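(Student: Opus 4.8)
The plan is to obtain Theorem~\ref{Thm-GenFunc-1} directly from Weber's generating function for $\QQ_n^{(2\eta,-\lambda+1)}$ quoted just above, by means of the substitution $w \mapsto -w$ together with the scaling relations \eqref{Eq-Modified-Notation} and \eqref{Eq-Monic-CRR-Polys}; no new analytic input is needed. First I would rewrite the right-hand side. Combining \eqref{Eq-Modified-Notation}, which gives $\QQ_n^{(2\eta,-\lambda+1)}(x) = (-1)^n 2^n (\lambda)_n \PP_n(b;x)$, with \eqref{Eq-Monic-CRR-Polys}, which gives $\PP_n(b;x) = \tfrac{(2\lambda)_n}{2^n(\lambda)_n}\widehat{\PP}_n(b;x)$, the factors $2^n(\lambda)_n$ cancel and one is left with the clean identity $\QQ_n^{(2\eta,-\lambda+1)}(x) = (-1)^n (2\lambda)_n \widehat{\PP}_n(b;x)$. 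Consequently the sum in Weber's formula equals $\sum_{n\ge 0} (-1)^n (2\lambda)_n \widehat{\PP}_n(b;x)\,w^n/n!$, and after replacing $w$ by $-w$ the factor $(-1)^n$ is absorbed, producing exactly the series $\sum_{n\ge 0} (2\lambda)_n \widehat{\PP}_n(b;x)\, w^n/n!$ that appears on the right of the asserted identity.

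It then remains to simplify the left-hand side of Weber's formula under the same substitution $w \mapsto -w$. The exponential factor becomes $e^{2\eta\arccot(x)}/e^{2\eta\arccot[x - w(x^2+1)]}$, which already matches the target. For the remaining algebraic factor the key is the elementary identity
\[
 \frac{1 + [\,x - w(x^2+1)\,]^2}{x^2+1} = (xw-1)^2 + w^2 ,
\]
which I would verify simply by expanding both sides (each equals $1 - 2xw + w^2(x^2+1)$). Raising this identity to the power $\lambda$ shows that the factor $(x^2+1)^{\lambda}$ in the numerator of Weber's left-hand side combines with $\{1+[x-w(x^2+1)]^2\}^{\lambda}$ in its denominator to give precisely $[(xw-1)^2+w^2]^{\lambda}$, which is the denominator demanded in the statement.

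Putting the two computations together yields the claimed generating function. The only step that is not purely formal is the algebraic identity displayed above, and even that is a one-line expansion, so no genuine obstacle arises: the entire substance of the result is contained in Weber's theorem and in the normalizations recorded in \eqref{Eq-Modified-Notation} and \eqref{Eq-Monic-CRR-Polys}.
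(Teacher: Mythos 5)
Your proof is correct and is essentially the paper's own argument: the paper also derives Theorem \ref{Thm-GenFunc-1} directly from Weber's generating function by ``a simplification of the left hand side'' together with the normalization \eqref{Eq-Modified-Notation}, which combined with \eqref{Eq-Monic-CRR-Polys} gives $\QQ_n^{(2\eta,-\lambda+1)}(x)=(-1)^n(2\lambda)_n\widehat{\PP}_n(b;x)$ exactly as you computed. You have merely made explicit the two details the paper leaves to the reader, namely the substitution $w\mapsto -w$ absorbing the sign $(-1)^n$ and the one-line identity $1+[x-w(x^2+1)]^2=(x^2+1)\,[(xw-1)^2+w^2]$, both of which check out.
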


\subsection{Generating function as an Appell sequence} \label{Subsec-Appell}

In order to see the importance of the new generating  function that we obtain for  $\{\widehat{\PP}_{n}(b; x) \}_{n \geq 0}$, we consider the functions $\FFa(b;w)$ and $\FFb(b;w)$ given by 
\begin{equation} \label{Eq-ERCWfun-def}
       \FFa(b;w) = \mathfrak{C}(b)\, w^{\lambda}\, \FFb(b; w) \quad \mbox{and} \quad \FFb(b;w) = e^{-iw} \,_1F_1(b;\,b + \overline{b};\,2iw), 
\end{equation}
where   
\begin{equation} \label{Eq-Gamow-Constant}
      \mathfrak{C}(b) =  2^{\lambda-1}  e^{\pi\eta/2} \frac{|\Gamma(b)|}{\Gamma(2\lambda)}
\end{equation}
and,  as we have assumed so far,  $b = \lambda + i \eta$ and $\lambda > 0$.  Here, the notation  $_1F_1$ denotes Kummer's confluent hypergeometric function. 

Clearly, from the $_1F_1$ hypergeometric representation  
\begin{equation} \label{Eq-G-to-Whittaker}
   \FFa(b;w) =   (i2)^{-\lambda}\mathfrak{C}(b)\,  M_{-i\eta, \lambda-1/2}(2iw) , \quad  \lambda > 0, 
\end{equation} 
where $M_{-i\eta, \lambda-1/2}$ is a subclass of the Whittaker functions (see, for example, \cite[Pg.\,195]{AndAskRoy-Book2000}). 
The function $\FFa(b;w)$ becomes familiar if one considers the alternative notations 
\begin{equation} \label{Eq-Alternative-Notations}
     \FFa(\overline{b};w)=F_{\lambda-1}(\eta,w) \quad \mbox{and} \quad \mathfrak{C}(\overline{b}) =  C_{\lambda-1}(\eta).
\end{equation}
When $\lambda$ takes positive integer values, the resulting functions $F_{L}(\eta,w)$, $L = 0,1,  \ldots$, are the so called regular Coulomb wave functions. Precisely, with our  definitions of $\FFa(b; w)$  and $\FFb(b; w)$  the regular Coulomb wave functions can be given by 
\begin{equation} \label{Eq-Connection-Coulomb}
     F_{L}(\eta, w) =  \FFa(L+1-i\eta;w) = C_{L}(\eta) w^{L+1} \FFb(L+1-i \eta; w),  \quad L = 0, 1, 2,  \ldots. 
\end{equation}
It is known that the regular Coulomb wave functions satisfy the differential equation 
\begin{equation} \label{Eq-EDO-Coulomb}
   F_{L}^{\prime\prime}(\eta, w) + \Big[1 - \frac{2\eta}{ w} - \frac{L(L+1)}{ w^{2}}\Big]\, F_{L}(\eta, w) = 0, 
\end{equation}
where $F_{L}^{\prime\prime}(\eta, w) = d^2F_{L}(\eta, w)/dw^2$. Moreover, it is also known that 
\begin{equation*} \label{Eq-TTRR-Coulomb}
   F_{L+1}(\eta, w) = \frac{(2L+1)}{ L\,|L+1+i\eta|}  \Big[\frac{L(L+1)}{w} + \eta\Big] F_{L}(\eta, w)  - \frac{(L+1)  |L+i\eta|}{ L\, |L+1+i\eta|}\, F_{L-1}(\eta, w),
\end{equation*} 
which holds for $ L = 1, 2, 3, \ldots $. This three term recurrence relation associated with the Coulomb wave functions  was first given by Powel in \cite{Powel-PhyRev1947}.

As stated  in \cite{Gautschi-SIAMRev1967}, the Coulmb wave  functions are of great importance in the study of nuclear interactions. They arise when Schr\"{o}dinger's  equation for a charged particle in the Coulomb field of a fixed charge is separated in polar coordinates.  For some of the earliest studies  on these functions we cite \cite{Shepanski-Butler-NucPhy1956} and references therein. 

Numerical evaluation of regular Coulomb wave functions have been the subject of many contributions including  \cite{Abromowitz-Stegun-1972, Froberg-RevMPhys1955, Gautschi-SIAMRev1967, Meligy-NucPhy1958, Shepanski-Butler-NucPhy1956}.  Except for  \cite{Meligy-NucPhy1958, Shepanski-Butler-NucPhy1956}, they are mainly based on the use of the above three term recurrence relation.  Moreover,  the derivation of some of the basic properties of the zeros of these regular Coulomb wave functions and also the evaluation of these zeros  have been based on an  eigenvalue problem that follows from this  three term recurrence relation  (see \cite{Ikebe-MCOM1975, Miya-Kiku-Cai-Ikebe-MCOM2001}). 

By examining the differential equations,  the three term recurrence relation and also the associated eigenvalue problems satisfied by  the regular Coulomb wave functions, it is somewhat  evident  that the function $F_{\lambda}(\eta,w) = \FFa(\overline{b}+1;w)$, obtained by extending the integer parameter $L$  to the real parameter $\lambda$, will preserve many of the properties satisfied by the Coulomb wave function $F_{L}(\eta,w)$, in particular with regards to the zeros. This is clearly true in the case $\eta = 0$ and the resulting functions lead to the Bessel functions.  With such a knowledge,  the author of \cite{Baricz-JMAA2015} looks at some Tur\'{a}n type inequalities associated with these extended regular Coulomb wave functions $F_{\lambda}(\eta,w)$ and obtains also some information regarding the zeros of these functions. With a different  objective,  the authors of  \cite{Stamp-Stovic-JMAA2014} study the orthogonal polynomials that follow from the extended three term recurrence relation, which they call orthogonal polynomials associated with Coulomb wave functions. For some other contributions regarding the functions $F_{\lambda}(\eta,w)$ for non-integer values of $\lambda$, and even complex values of the parameter $\lambda$, we cite, for example,  \cite{DeanoSeguraTemme-JCAM2010, Michel-CompPhyComm2007} and references therein.  

In view of the above observations, in the present manuscript we will refer to the functions $\FFa(b;w) =F_{\lambda-1}(-\eta,w)$  as {\em extended regular Coulomb wave} functions (ERCW functions for short). 

From the recurrence for the regular Coulomb wave functions there follows  
\begin{equation} \label{Eq-TTRR-Fb}
   \FFa(b+2;w) = \frac{(2\lambda+1)}{ \lambda\,|b+1|}  \Big[\frac{\lambda(\lambda+1)}{w} - \eta\Big] \FFa(b+1;w) - \frac{(\lambda+1)  |b|}{ \lambda\, |b+1|}\, \FFa(b;w),
\end{equation} 
which holds for $ \lambda > 0$. This result can be  verified from  \eqref{Eq-G-to-Whittaker} and from well known contiguous  relations (see \cite[pg.\,27]{Slater-book}) satisfied by  Whittaker functions. 

The following theorem gives the the role played by the ERCW function $\FFa(b;w)$ as part of a  generating function for the   monic CRR polynomials $\widehat{\PP}_{n}(b;x)$.

\begin{theo} \label{Thm-Appell-Expansion} 
Let $b = \lambda + i\eta$, where $\lambda > 0$. Then the sequence $\{\widehat{\PP}_{n}(b;.)\}_{n \geq 0}$ of monic complementary Romanovski-Routh polynomials is an Appell sequence and satisfy  
\begin{equation} \label{Eq-Appell-GenFunc}
     \frac{1}{\mathfrak{C}(b)} e^{xw} w^{-\lambda} \FFa(b;w) = e^{xw}\,\FFb(b; w) = \sum_{n=0}^{\infty} \widehat{\PP}_{n}(b;x)\, \frac{w^{n}}{n!}. 
\end{equation}  

\end{theo}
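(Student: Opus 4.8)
The plan is to prove the two assertions of Theorem~\ref{Thm-Appell-Expansion} — that $\{\widehat{\PP}_{n}(b;.)\}_{n\geq 0}$ is an Appell sequence, and that the displayed generating-function identity \eqref{Eq-Appell-GenFunc} holds — by reducing the second equality to a single known expansion. First I would record that the Appell property is immediate from \eqref{Eq-AppellProp}: since $\PP_{n}^{\prime}(b;x) = n(1-\ell_{n}^{(\lambda)})\PP_{n-1}(b;x)$ and $\mathfrak{p}_{n}^{(b)} = (1-\ell_{n}^{(\lambda)})\mathfrak{p}_{n-1}^{(b)}$ by \eqref{Eq-minimal-ParamSeq}, dividing by $\mathfrak{p}_{n}^{(b)}$ gives $\widehat{\PP}_{n}^{\prime}(b;x) = n\,\widehat{\PP}_{n-1}(b;x)$, which is exactly the defining relation of an Appell sequence. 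The first equality in \eqref{Eq-Appell-GenFunc} is purely a definitional unwrapping: by \eqref{Eq-ERCWfun-def} we have $\FFa(b;w)=\mathfrak{C}(b)\,w^{\lambda}\,\FFb(b;w)$, so $\mathfrak{C}(b)^{-1}e^{xw}w^{-\lambda}\FFa(b;w)=e^{xw}\FFb(b;w)$ with no computation at all.

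So the genuine content is the right-hand equality $e^{xw}\FFb(b;w)=\sum_{n\geq 0}\widehat{\PP}_{n}(b;x)\,w^{n}/n!$. Here I would invoke Theorem~\ref{Thm-GenFunc-1}, which already supplies a closed form for $\sum_{n\geq 0}(2\lambda)_{n}\widehat{\PP}_{n}(b;x)\,w^{n}/n!$. The cleanest route, however, is to set $x=0$ first. The Appell property forces the whole generating function to factor: because $\widehat{\PP}_{n}^{\prime}=n\widehat{\PP}_{n-1}$, the formal series $G(x,w):=\sum_{n\geq 0}\widehat{\PP}_{n}(b;x)\,w^{n}/n!$ satisfies $\partial_{x}G = wG$, whence $G(x,w)=e^{xw}G(0,w)$. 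Thus it suffices to identify the single-variable series $G(0,w)=\sum_{n\geq 0}\widehat{\PP}_{n}(b;0)\,w^{n}/n!$ with $\FFb(b;w)=e^{-iw}\,{}_1F_1(b;b+\bar b;2iw)$. I would compute $\widehat{\PP}_{n}(b;0)$ from the hypergeometric form of $\PP_{n}$ in Theorem~\ref{Thm-Basic-Relations} (equivalently from \eqref{Eq-Rn-HyperForm} via \eqref{Eq-Transformation-Rn-Pn}), obtaining the constant term as a terminating ${}_2F_1$ evaluated at a specific point, and then multiply by $2^{n}(\lambda)_{n}/(2\lambda)_{n}$ per \eqref{Eq-Monic-CRR-Polys}.

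The main obstacle is therefore a one-variable special-function identity: showing that $\sum_{n\geq 0}\widehat{\PP}_{n}(b;0)\,w^{n}/n! = e^{-iw}\,{}_1F_1(b;b+\bar b;2iw)$. The most efficient way to overcome it is not direct series manipulation but to recognize that $e^{xw}\FFb(b;w)$, viewed as a function of $w$, satisfies a first-order relation coming from the confluent hypergeometric differential/contiguous structure of ${}_1F_1$, so that its Taylor coefficients in $w$ obey a recurrence matching the three-term recurrence \eqref{Eq-TTRR-for-CRR-polys} for the $\widehat{\PP}_{n}$; matching the first one or two coefficients then closes the argument by induction. Concretely, I would substitute $G(x,w)=e^{xw}\FFb(b;w)$ into the generating relation one expects from \eqref{Eq-TTRR-for-CRR-polys}, translate the recurrence for $\widehat{\PP}_{n}$ into an ODE in $w$ satisfied by $\FFb(b;w)$, and verify that $e^{-iw}\,{}_1F_1(b;b+\bar b;2iw)$ solves it with the correct normalization $\FFb(b;0)=1=\widehat{\PP}_{0}(b;0)$. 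The confluent limit of the Pfaff-type transformations already cited after \eqref{Eq-Rn-HyperForm}, applied to the expansion in Theorem~\ref{Thm-GenFunc-1}, provides the alternative bookkeeping route if the ODE verification proves cumbersome.
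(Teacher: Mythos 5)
Your first half reproduces the paper's argument exactly: the Appell property $\widehat{\PP}_{n}^{\prime}(b;x)=n\,\widehat{\PP}_{n-1}(b;x)$ follows from \eqref{Eq-AppellProp} together with \eqref{Eq-minimal-ParamSeq}, the factorization $G(x,w)=e^{xw}G(0,w)$ is the same observation the paper makes (a generating function of the form $e^{xw}F(w)$), and the left equality in \eqref{Eq-Appell-GenFunc} is indeed definitional from \eqref{Eq-ERCWfun-def}. The gap is in the one step that carries all the content, the identification $F(w)=\FFb(b;w)$, which you leave as a plan whose stated mechanism does not work as described. You say $e^{xw}\FFb(b;w)$ ``satisfies a first-order relation coming from the confluent hypergeometric differential/contiguous structure of $\,_1F_1$'' and that verifying the resulting ODE ``with the correct normalization $\FFb(b;0)=1$'' closes the argument. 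But the relevant equation is second order ($w^{\lambda}\FFb(b;w)$ is, up to constants, the Whittaker function $M_{-i\eta,\lambda-1/2}(2iw)$, cf.\ \eqref{Eq-G-to-Whittaker}), so a single initial value does not single out a solution: you would additionally need that $w=0$ is a regular singular point with exponents $0$ and $1-2\lambda$, that the second Frobenius solution is not analytic at $0$ for $\lambda>0$ (with logarithmic cases when $2\lambda$ is a positive integer), and that the series $G(0,w)$ has positive radius of convergence --- none of which appears in the sketch. Likewise, translating the monic form of \eqref{Eq-TTRR-for-CRR-polys} into an ODE in $w$ is real unperformed work: the monic recurrence coefficients, e.g.\ $c_{n+1}^{(b)}=\eta/(\lambda+n)$ and $\mathfrak{p}_{n}^{(b)}/\mathfrak{p}_{n+1}^{(b)}=2(\lambda+n)/(2\lambda+n)$, are rational in $n$, forcing Euler-operator manipulations; and in the purely recursive variant, showing that the Taylor coefficients of $\FFb(b;w)$ obey the same three-term recurrence amounts to an unproven contiguous relation for $\,_2F_1(-n,b;b+\overline{b};2)$ (the coefficients of $\,_1F_1$ alone satisfy a first-order recurrence, but multiplication by $e^{-iw}$ destroys that).

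The irony is that the ``direct series manipulation'' you set aside as inefficient is precisely how the paper finishes, and it is a two-line computation. At your reduction point $x=0$, the hypergeometric formula in Theorem \ref{Thm-Basic-Relations} with \eqref{Eq-Monic-CRR-Polys} gives $\widehat{\PP}_{n}(b;0)=(-i)^{n}\,_2F_1(-n,b;\,b+\overline{b};\,2)$, while the Cauchy product of $e^{-iw}$ with $\,_1F_1(b;\,b+\overline{b};\,2iw)$, using $(-n)_{k}(-1)^{k}/n!=1/(n-k)!$, gives $\FFb(b;w)=\sum_{n\geq0}\frac{(-i)^{n}}{n!}\,_2F_1(-n,b;\,b+\overline{b};\,2)\,w^{n}$, and the identification is done. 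The paper performs this same coefficient match for general $x$, writing $F(w)=e^{-iw}H(w)$ and comparing $e^{(x-i)w}H(w)$ against the expansion $\widehat{\PP}_{n}(b;x)/n!=\sum_{k=0}^{n}\frac{(b)_{k}(2i)^{k}}{(2\lambda)_{k}k!}\frac{(x-i)^{n-k}}{(n-k)!}$ in powers of $(x-i)$. Your alternative fallback --- a ``confluent limit of the Pfaff-type transformations applied to Theorem \ref{Thm-GenFunc-1}'' --- is also not a routine step, since passing from $\sum_{n}(2\lambda)_{n}\widehat{\PP}_{n}(b;x)w^{n}/n!$ to $\sum_{n}\widehat{\PP}_{n}(b;x)w^{n}/n!$ requires removing the factor $(2\lambda)_{n}$, i.e.\ an inverse Borel--Laplace transform, not a transformation formula. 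So the skeleton of your proof is sound and matches the paper, but the decisive identification rests on an ODE-uniqueness claim that fails as stated and on bookkeeping that is neither done nor obviously benign.
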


\noindent {\bf Proof}.  The property \eqref{Eq-AppellProp}  of the CRR polynomials $\PP_{n}(b;x)$ shows us that the monic polynomial sequence $\{\widehat{\PP}_{n}(b;x)\}_{n \geq 0}$ is an Appell \cite{Appell-1880} sequence.  Thus, there holds a generating function of the form $e^{xw} F(w)$. We need to prove  $F(w) = \FFb(b;w)$.  

By setting  $F(w) = e^{-iw} H(w)$, where $ H(w) = \sum_{j=0}^{\infty} h_{j}^{(b)} w^{j}$, 
therefore one needs to prove that \eqref{Eq-Appell-GenFunc} holds if  $H(w) =  \,_1F_1(b;\,b + \overline{b};\,2iw)$.   That is, 
\begin{equation} \label{Eq-Hipergeo-Appell}
        h_{j}^{(b)}  = \frac{(b)_{j}\, (2i)^{j} }{(2\lambda)_{j}\, j!}, \quad j \geq 0.
\end{equation}
We have   
\[
     e^{xw}\,F(w) = e^{(x-i)w} \sum_{j=0}^{\infty} h_{j}^{(b)} w^{j} = \Big(\sum_{k=0}^{\infty} \frac{(x-i)^k}{k!}\,w^k\Big)\, \Big(\sum_{j=0}^{\infty} h_{j}^{(b)} w^{j}\Big). 
\]
Hence, 
\[
    e^{xw}\,F(w)  = \sum_{n=0}^{\infty} \Big( \sum_{l=0}^{n} h_{n-l}^{(b)} \frac{(x-i)^{l}}{l!}\Big) w^{n}. 
\]
From the hypergeometric expression for $\PP_{n}(b;.)$ in Theorem  \ref{Thm-Basic-Relations} we can easily verify that if 
\[
    \sum_{l=0}^{n} h_{n-l}^{(b)} \frac{(x-i)^{l}}{l!} = \frac{1}{n!}\widehat{P}_{n}(b;x), \quad n \geq 0,  
\]
then $h_{j}^{(b)}$ are as in  \eqref{Eq-Hipergeo-Appell}.  This completes the proof of the Theorem. \hfill \qed

\begin{remark}
The function $H(w)$ considered in the proof of Theorem \ref{Thm-Appell-Expansion} is  a $\,_1F_1$ confluent hypergeometric function and hence it is an entire function. Thus, the function $F(w) = \FFb(b;w)$ and also, for each  $x$, the generating function  $e^{xw}\FFb(b; w)$ are entire functions. Thus, the right hand side of  \eqref{Eq-Appell-GenFunc} is absolutely convergent for all $w$ on the complex plane.    
\end{remark}

By letting $b = \lambda + i \eta = 1$ in Theorem \ref{Thm-Appell-Expansion},  it is not difficult to verify the following. 

\begin{coro} 
\[  
        e^{xw} w^{-1} \sin(w) =  e^{xw}\,\FFb(1;w) = \sum_{n=0}^{\infty} \widehat{\PP}_n(1;x) \frac{w^n}{n!} . 
\]
\end{coro}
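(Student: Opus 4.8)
The plan is to specialize Theorem~\ref{Thm-Appell-Expansion} at the value $b = \lambda + i\eta = 1$, which forces $\lambda = 1$ and $\eta = 0$. First I would identify the function $\FFb(1; w)$ explicitly. From the definition \eqref{Eq-ERCWfun-def}, we have $\FFb(b;w) = e^{-iw}\,_1F_1(b;\, b+\overline{b};\, 2iw)$, so at $b = 1$ this becomes $\FFb(1;w) = e^{-iw}\,_1F_1(1;\, 2;\, 2iw)$. The key elementary fact to invoke is the closed form $\,_1F_1(1;\,2;\,z) = (e^{z}-1)/z$, which follows directly from the power series since $(1)_k/(2)_k = 1/(k+1)$, giving $\sum_{k\ge 0} z^k/(k+1)! = (e^z-1)/z$.

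Next I would substitute $z = 2iw$ to obtain $\,_1F_1(1;\,2;\,2iw) = (e^{2iw}-1)/(2iw)$, and therefore
\begin{equation*}
   \FFb(1;w) = e^{-iw}\,\frac{e^{2iw}-1}{2iw} = \frac{e^{iw}-e^{-iw}}{2iw} = \frac{\sin(w)}{w} = w^{-1}\sin(w).
\end{equation*}
This establishes the first equality of the corollary, namely $e^{xw}w^{-1}\sin(w) = e^{xw}\FFb(1;w)$. The second equality, $e^{xw}\FFb(1;w) = \sum_{n=0}^{\infty}\widehat{\PP}_n(1;x)\,w^n/n!$, is then nothing but the statement of \eqref{Eq-Appell-GenFunc} in Theorem~\ref{Thm-Appell-Expansion} evaluated at $b=1$, which requires checking that the hypotheses hold: here $\lambda = 1 > 0$, so the theorem applies directly and no further work is needed for this half.

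There is essentially no serious obstacle here; the corollary is a routine specialization. The only point requiring a small amount of care is confirming the reduction $\,_1F_1(1;\,2;\,\cdot)$ to the elementary form $(e^{z}-1)/z$ and verifying that the factor $w^{-\lambda} = w^{-1}$ appearing on the left-hand side of \eqref{Eq-Appell-GenFunc} matches the $w^{-1}$ in the claimed expression. One should also note that $\mathfrak{C}(1) = 2^{0}e^{0}|\Gamma(1)|/\Gamma(2) = 1$ from \eqref{Eq-Gamow-Constant}, so the prefactor $1/\mathfrak{C}(b)$ in \eqref{Eq-Appell-GenFunc} is indeed $1$ at $b=1$, which is consistent with the clean form of the corollary and confirms there are no stray constants. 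Assembling these observations yields the stated identity.
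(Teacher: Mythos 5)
Your proposal is correct and matches the paper's intended argument: the paper simply states that the corollary follows by setting $b=\lambda+i\eta=1$ in Theorem~\ref{Thm-Appell-Expansion}, and your computation of $\FFb(1;w)=e^{-iw}\,{}_1F_1(1;2;2iw)=w^{-1}\sin(w)$ (together with the check $\mathfrak{C}(1)=1$) is precisely the routine verification left to the reader. Nothing further is needed.
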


%\begin{remark}

The ECWF function $\FFa(b;w)$, when $\Im(b) = \eta = 0$, is related to the Bessel function of order $\lambda-1/2$. To be precise,
\[
    \frac{2\, \Gamma(2\lambda)}{\Gamma(\lambda)} (2w)^{-\lambda} \FFa(\lambda;w) = \FFb(\lambda;w) = \Gamma(\lambda+1/2) \Big(\frac{w}{2}\Big)^{-\lambda + 1/2} J_{\lambda-1/2}(w). 
\] 
For more information on  Bessel functions see, for example,  \cite{Abromowitz-Stegun-1972}, \cite{AndAskRoy-Book2000} and \cite{Watson-Book}.  We can now state the following. 

\begin{coro}  \label{Coro-Bessel-Expansion}
For $\alpha > -1/2$ the following expansion formula holds with respect to the Bessel function $J_{\alpha}(w)$: 
\[
    e^{xw} J_{\alpha}(w) = \frac{1}{\Gamma(\alpha+1)} \big(\frac{w}{2}\big)^{\alpha} \sum_{n=0}^{\infty} \widehat{\PP}_n(\alpha+1/2;x) \frac{w^n}{n!} .
\]

\end{coro}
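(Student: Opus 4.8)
The plan is to obtain this expansion as a direct specialization of the Appell generating function of Theorem~\ref{Thm-Appell-Expansion}, combined with the Bessel reduction of $\FFb(\lambda;\cdot)$ recorded immediately before the statement. First I would put $\eta = 0$, so that $b = \lambda$ is real with $\lambda > 0$. Since Theorem~\ref{Thm-Appell-Expansion} is valid for every $b = \lambda + i\eta$ with $\lambda > 0$, this specialization is legitimate and gives
\[
   e^{xw}\FFb(\lambda;w) = \sum_{n=0}^{\infty} \widehat{\PP}_{n}(\lambda;x)\, \frac{w^{n}}{n!}.
\]

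Next I would insert the identity $\FFb(\lambda;w) = \Gamma(\lambda+1/2)\,(w/2)^{-\lambda+1/2} J_{\lambda-1/2}(w)$ displayed just above the corollary, obtaining
\[
   \Gamma(\lambda+1/2)\Big(\frac{w}{2}\Big)^{-\lambda+1/2} e^{xw} J_{\lambda-1/2}(w) = \sum_{n=0}^{\infty} \widehat{\PP}_{n}(\lambda;x)\, \frac{w^{n}}{n!}.
\]
Then I would reparametrize by setting $\alpha = \lambda - 1/2$, under which the condition $\lambda > 0$ becomes exactly the hypothesis $\alpha > -1/2$, while $-\lambda+1/2 = -\alpha$ and $\lambda + 1/2 = \alpha + 1$. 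Dividing through by $\Gamma(\alpha+1)$ and transferring the factor $(w/2)^{-\alpha}$ to the right-hand side yields precisely the asserted formula. Thus the body of the argument is pure bookkeeping of the half-integer Gamma factor and the power of $w/2$.

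The only point requiring care is the justification of the termwise operations, but this is immediate from the Remark following Theorem~\ref{Thm-Appell-Expansion}: $\FFb(b;\cdot)$ is a confluent hypergeometric function and hence entire, so the series $\sum_{n} \widehat{\PP}_{n}(\lambda;x)\, w^{n}/n!$ converges absolutely on all of $\mathbb{C}$, and multiplication by $\Gamma(\alpha+1)^{-1}(w/2)^{\alpha}$ is a valid rearrangement for $w \neq 0$ (the $w=0$ case being the trivial identity of the leading terms). Consequently there is no genuine obstacle: the corollary is a reparametrization of Theorem~\ref{Thm-Appell-Expansion} together with the $\eta = 0$ reduction of the Whittaker function in \eqref{Eq-G-to-Whittaker} to a Bessel function. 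If one wished to make the argument fully self-contained, the single remaining step would be to verify that Bessel reduction itself, which follows from the standard collapse of $M_{0,\lambda-1/2}(2iw)$ to a multiple of $(w)^{\lambda} J_{\lambda-1/2}(w)$; but since that identity is already displayed in the text, I would simply invoke it.
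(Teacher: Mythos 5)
Your proposal is correct and follows exactly the route the paper intends: the corollary is stated as an immediate consequence of Theorem~\ref{Thm-Appell-Expansion} with $\eta = 0$ together with the Bessel reduction of $\FFb(\lambda;\cdot)$ displayed just before it, and your bookkeeping ($\alpha = \lambda - 1/2$, so $\lambda > 0$ becomes $\alpha > -1/2$) matches the paper's implicit derivation. Your additional appeal to the Remark after Theorem~\ref{Thm-Appell-Expansion} to justify the termwise manipulations is a sound touch the paper leaves tacit.
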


With the connection \eqref{Eq-Connection-Coulomb} with the regular Coulomb wave functions we can state:  

\begin{coro} \label{Coro-Coulomb-Expansion}
The following expansion formula holds with respect to the regular Coulomb wave function $F_{L}(\eta,w)$:
\[
    e^{xw} \FFa(L+1-i\eta;w) = e^{xw} F_{L}(\eta,w) = C_{L}(\eta)\, w^{L+1} \sum_{n=0}^{\infty} \widehat{\PP}_n(L+1-i\eta; x) \frac{w^n}{n!}, 
\]
for $L = 0, 1, 2, \ldots$, where the so called Gamow-Sommerfeld factor $C_{L}(\eta)$ is as in \eqref{Eq-Gamow-Constant} and \eqref{Eq-Alternative-Notations}. 

\end{coro}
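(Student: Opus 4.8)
The plan is to obtain this corollary as a direct specialization of Theorem \ref{Thm-Appell-Expansion}, combined with the identification \eqref{Eq-Connection-Coulomb} of the ERCW function $\FFa$ with the regular Coulomb wave function. Everything needed is already in place, so the task is essentially one of substitution and bookkeeping rather than of new analysis.

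First I would invoke Theorem \ref{Thm-Appell-Expansion} at the particular parameter value $b = L+1-i\eta$. With this choice one has $\lambda = \Re(b) = L+1$, a positive integer, so the hypothesis $\lambda > 0$ of the theorem certainly holds. The Appell generating-function identity \eqref{Eq-Appell-GenFunc} then reads
\[
   e^{xw}\,\FFb(L+1-i\eta; w) = \sum_{n=0}^{\infty} \widehat{\PP}_{n}(L+1-i\eta;x)\, \frac{w^{n}}{n!}.
\]
Next I would multiply both sides by $C_{L}(\eta)\,w^{L+1}$ and recognize the resulting left-hand side. By the definition \eqref{Eq-ERCWfun-def} of $\FFa$ together with the notational identity \eqref{Eq-Alternative-Notations}, one has $\mathfrak{C}(L+1-i\eta) = C_{L}(\eta)$ and $\lambda = L+1$, so that $C_{L}(\eta)\, w^{L+1}\, \FFb(L+1-i\eta;w) = \FFa(L+1-i\eta;w)$, which by \eqref{Eq-Connection-Coulomb} equals $F_{L}(\eta,w)$. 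Hence the left-hand side becomes $e^{xw}\FFa(L+1-i\eta;w) = e^{xw}F_{L}(\eta,w)$, and the right-hand side becomes $C_{L}(\eta)\,w^{L+1}\sum_{n=0}^{\infty}\widehat{\PP}_{n}(L+1-i\eta;x)\,w^{n}/n!$, which is exactly the claimed expansion.

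There is no genuine obstacle in this argument; the only point requiring care is the bookkeeping of the parameter correspondence $b \leftrightarrow (\lambda,\eta,L)$. Specifically, one must track that under $b = L+1-i\eta$ the imaginary part of $b$ is $-\eta$ rather than $+\eta$, and verify through \eqref{Eq-Alternative-Notations}—which records $\FFa(\overline b;w) = F_{\lambda-1}(\eta,w)$ and $\mathfrak{C}(\overline b) = C_{\lambda-1}(\eta)$—that the Gamow--Sommerfeld factor and the power $w^{L+1}$ produced by the specialization coincide with those appearing in \eqref{Eq-Connection-Coulomb}. Once these identifications are confirmed, the corollary follows immediately, with absolute convergence of the series for all $w$ guaranteed by the entirety of $\FFb(b;\cdot)$ noted in the remark following Theorem \ref{Thm-Appell-Expansion}.
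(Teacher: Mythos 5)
Your proposal is correct and follows exactly the route the paper intends: the corollary is stated there as an immediate consequence of Theorem \ref{Thm-Appell-Expansion} specialized at $b = L+1-i\eta$ (so $\lambda = L+1$) combined with the identification \eqref{Eq-Connection-Coulomb}, which is precisely your substitution-and-bookkeeping argument. Your careful tracking of the sign convention $\Im(b) = -\eta$ through \eqref{Eq-Alternative-Notations}, giving $\mathfrak{C}(L+1-i\eta) = C_L(\eta)$, is the one point of substance, and you handle it correctly.
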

 
Letting $x = 0$ in  Corollary \ref{Coro-Coulomb-Expansion}  gives the series expansion 
\[
    F_{L}(\eta,w) = C_{L}(\eta)\, w^{L+1} \sum_{k=L+1}^{\infty} A_{k}^{L}(\eta)\, w^{k-L-1},
\] 
where $(k-L-1)!\,A_{k}^{L}(\eta) = \widehat{\PP}_{k-L-1}(L+1-i\eta; 0)$. This expansion formula, together with the three term recurrence relation \eqref{Eq-TTRR-for-CRR-polys} satisfied by   $\{\widehat{\PP}_n(L+1-i\eta; 0)\}_{n \geq 0}$, is exactly the expansion result found in  \cite[Pg.\,538]{Abromowitz-Stegun-1972}. This expansion result for $F_{L}(\eta,w)$ has first appeared in  Yost, Wheeler and Breit \cite{Yost-etc-PhyRev1936}. Now with the hypergeometric expression in Theorem \ref{Thm-Basic-Relations} for  $\PP_{n}(b;x)$ we can give the following closed form expression for $A_{k}^{L}(\eta)$: 
\[
    A_{k+L+1}^{L}(\eta) = \frac{(-i)^{k}}{k!} \,_2F_1(-k,L+1-i\eta; 2L+2; 2),  \quad k = 0,1,2 , \ldots .
\]

One can also verify that the result corresponding to  $x = -\eta/(L+1)$ in Corollary \ref{Coro-Coulomb-Expansion} is the expansion formula presented in \cite{Meligy-NucPhy1958}.

We now give some further expansion formulas associated with the functions $\FFb(b;w)$ and the corresponding ERCW functions $\FFa(b;w)$.

\begin{theo} \label{Thm-Appell-expansion-2}
Let $b = \lambda + i \eta$, where $\lambda > 0$.  Let the real sequences $\{\aR_{n}\}_{n \geq 0} = \{\aR_{n}^{(\lambda, \eta)}\}_{n \geq 0}$ and $\{\bR_{n}\}_{n \geq 0} = \{\bR_{n}^{(\lambda, \eta)}\}_{n \geq0}$ be given by
\[
    \left[ \begin{array}l
       \aR_{n+1} \\[1ex]
       \bR_{n+1}
       \end{array} \right]
     = \frac{2}{2\lambda+n} 
    \left[ \begin{array}{cc}
       -\eta & (\lambda+n) \\[1ex]
       -(\lambda+n) & - \eta
       \end{array} \right]
    \left[ \begin{array}l
       \aR_{n} \\[1ex]
       \bR_{n}
       \end{array} \right], \quad n \geq 0,
\]
with $\aR_{0} = 1$ and $\bR_{0} = 0$. Then
\begin{equation} \label{Eq-Sine-Cosine-G}
  \begin{array}l
   \dsp \frac{1}{\mathfrak{C}(b)}  w^{-\lambda} \cos(w)\, \FFa(b;w) = \cos(w)\, \FFb(b;w) = \sum_{n=0}^{\infty} \aR_{n}\, \frac{w^n}{n!}, \\[3ex]
   \dsp  \frac{1}{\mathfrak{C}(b)}  w^{-\lambda} \sin(w)\, \FFa(b;w) = \sin(w)\, \FFb(b;w) = w \sum_{n=0}^{\infty} \frac{1}{n+1}\bR_{n+1}\, \frac{w^n}{n!}.
  \end{array}
\end{equation}
Moreover, if  $\cR_{n}(w)  = \aR_{n}\cos(w) + \frac{1}{n+1}\bR_{n+1}\, w \sin(w)$, $n \geq 0$, then 
\[
    \FFb(b;w) = \sum_{n=0}^{\infty} \cR_{n}(w)\, \frac{w^n}{n!}.
\]
\end{theo}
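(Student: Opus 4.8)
The plan is to solve the $2\times2$ recurrence in closed form, recognize the answer as the Taylor coefficients of a confluent hypergeometric function, and then obtain the two expansions by splitting $\FFb(b;w)$ into its $e^{\pm iw}$ pieces; the third identity is then a Pythagorean consequence of the first two.

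\emph{Step 1 (linearize the matrix recurrence).} I would introduce the complex sequence $z_n=\aR_n+i\,\bR_n$, so that $z_0=1$. The point is that the specific matrix $\frac{2}{2\lambda+n}\left[\begin{smallmatrix}-\eta & \lambda+n\\ -(\lambda+n) & -\eta\end{smallmatrix}\right]$ acts on $(\aR_n,\bR_n)$ as multiplication by a single complex number: a short computation using $-\eta-i(\lambda+n)=-i\bigl[(\lambda+n)-i\eta\bigr]$ and $(\lambda+n)-i\eta=\overline{b}+n$ gives $z_{n+1}=\dfrac{-2i(\overline{b}+n)}{2\lambda+n}\,z_n$, whence $z_n=\dfrac{(-2i)^n(\overline{b})_n}{(2\lambda)_n}$ (recall $b+\overline{b}=2\lambda$ and $\overline{b}=\lambda-i\eta$).

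\emph{Step 2 (identify the generating series).} From the closed form, $\sum_n \frac{z_n}{n!}w^n={}_1F_1(\overline{b};2\lambda;-2iw)$, and conjugating, $\sum_n \frac{\overline{z_n}}{n!}w^n={}_1F_1(b;2\lambda;2iw)$. By the definition \eqref{Eq-ERCWfun-def} we have $\FFb(b;w)=e^{-iw}{}_1F_1(b;2\lambda;2iw)$, and Kummer's transformation ${}_1F_1(a;c;z)=e^{z}{}_1F_1(c-a;c;-z)$ (with $c-b=\overline{b}$) also gives $\FFb(b;w)=e^{iw}{}_1F_1(\overline{b};2\lambda;-2iw)$. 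Consequently $e^{iw}\FFb(b;w)=\sum_n \frac{\overline{z_n}}{n!}w^n$ and $e^{-iw}\FFb(b;w)=\sum_n \frac{z_n}{n!}w^n$.

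\emph{Step 3 (split into cosine and sine).} Writing $\cos w=\tfrac12(e^{iw}+e^{-iw})$ and $\sin w=\tfrac1{2i}(e^{iw}-e^{-iw})$ and inserting the two series, the real and imaginary parts separate cleanly: $\cos(w)\FFb(b;w)=\sum_n \frac{\Re z_n}{n!}w^n=\sum_n \aR_n\frac{w^n}{n!}$, and $\sin(w)\FFb(b;w)=-\sum_n \frac{\Im z_n}{n!}w^n=-\sum_n \bR_n\frac{w^n}{n!}$. Since $\bR_0=0$, reindexing $n\mapsto n+1$ in the sine sum extracts a factor $w$ together with the shift $\bR_n\to \bR_{n+1}/(n+1)$, producing the displayed factored form. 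The leading equalities involving $\FFa$ are immediate from $\FFa(b;w)=\mathfrak{C}(b)w^{\lambda}\FFb(b;w)$. For the last identity I would multiply the cosine expansion by $\cos w$ and the sine expansion by $\sin w$, add, and use $\cos^2 w+\sin^2 w=1$; the coefficient of $w^n/n!$ is then exactly $\cR_n(w)=\aR_n\cos w+\frac{1}{n+1}\bR_{n+1}w\sin w$, and the series collapses to $\FFb(b;w)$.

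The only genuine obstacle is Step 1: spotting that this particular ``$-\eta$ on the diagonal, $\pm(\lambda+n)$ off-diagonal'' matrix is complex multiplication, so that the coupled system linearizes into a one-term scalar recurrence with an explicit Pochhammer (hypergeometric) solution. Everything afterwards is bookkeeping — invoking the Kummer transformation and, in the sine identity, carefully tracking the sign introduced by the $1/(2i)$ factor and the $\bR_0=0$ reindexing so that the expansion matches the stated form.
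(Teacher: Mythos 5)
Your Steps 1--2 are correct, and the complexification is a nice observation: with $z_n=\aR_n+i\,\bR_n$ the printed recurrence does linearize to $z_{n+1}=\frac{-2i(\overline{b}+n)}{2\lambda+n}\,z_n$, giving $z_n=\frac{(-2i)^n(\overline{b})_n}{(2\lambda)_n}$, and the identifications $e^{-iw}\FFb(b;w)={}_1F_1(\overline{b};2\lambda;-2iw)$ and $e^{iw}\FFb(b;w)={}_1F_1(b;2\lambda;2iw)$ via Kummer's transformation are sound. The genuine gap is at the end of Step 3: you correctly derive $\sin(w)\,\FFb(b;w)=-\sum_{n}\bR_n\frac{w^n}{n!}$, and no reindexing removes that minus sign --- what your argument actually proves is $\sin(w)\,\FFb(b;w)=-\,w\sum_{n\ge 0}\frac{\bR_{n+1}}{n+1}\frac{w^n}{n!}$, the \emph{negative} of the displayed formula. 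Your closing claim that the expansion ``matches the stated form'' is false for the sequence generated by the matrix as printed: at $n=1$ with $\eta=0$ the recurrence gives $\bR_1=\frac{2}{2\lambda}(-\lambda)=-1$, while $\FFb(\lambda;0)=1$ forces $\sin(w)\,\FFb(\lambda;w)=w+O(w^2)$, i.e.\ the coefficient must be $+1$. The sign error then propagates to the last identity: with your $z_n$, multiplying the cosine expansion by $\cos w$ and the (correctly signed) sine expansion by $\sin w$ and adding yields $\sum_n \cR_n(w)\frac{w^n}{n!}=\cos(2w)\,\FFb(b;w)$, not $\FFb(b;w)$.

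To be fair, the discrepancy is not your arithmetic but a sign typo in the theorem as printed, and your computation exposes it: the paper's own proof verifies the recurrence $\tilde{\aR}_{n+1}=-\eta\tilde{\aR}_n-(\lambda+n)\tilde{\bR}_n$, $\tilde{\bR}_{n+1}=(\lambda+n)\tilde{\aR}_n-\eta\tilde{\bR}_n$, whose off-diagonal signs are \emph{opposite} to those in the displayed matrix. With that corrected matrix one gets $z_n=\aR_n+i\,\bR_n=\frac{(2i)^n(b)_n}{(2\lambda)_n}=\widehat{\PP}_n(b;i)$, and your argument goes through verbatim with the plus sign. A correct writeup must either flag and fix the matrix or carry the minus sign through the two sine-dependent identities; asserting the match as you did is the one step that fails. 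Methodologically your route is genuinely different from the paper's and somewhat more self-contained: the paper sets $x=\pm i$ in the Appell generating function of Theorem \ref{Thm-Appell-Expansion}, reads off $2\aR_n=\widehat{\PP}_n(b;i)+\widehat{\PP}_n(b;-i)$ and $2i\bR_n=\widehat{\PP}_n(b;i)-\widehat{\PP}_n(b;-i)$ from the known values $\widehat{\PP}_n(b;\pm i)$, and then verifies the recurrence; you instead solve the recurrence in closed form and replace the appeal to Theorem \ref{Thm-Appell-Expansion} by the classical Kummer transformation, which makes the two expansions in \eqref{Eq-Sine-Cosine-G} independent of the generating-function theorem.
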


\begin{proof}  
In  \eqref{Eq-Appell-GenFunc},  letting $x = i$ and $x = -i$, and then, respectively,  summing and subtracting the resulting equations, we get  
\[
   \begin{array} {l}
   \dsp 2 \cos(w) \, \FFb(b;w) = \sum_{n=0}^{n} \big[ \widehat{\PP}_{n}(b; i) + \widehat{\PP}_{n}(b; -i) \big] \, \frac{w^{n}}{n!}, \\[2ex]
   \dsp i2 \sin(w) \, \FFb(b;w) = \sum_{n=0}^{n} \big[ \widehat{\PP}_{n}(b; i) - \widehat{\PP}_{n}(b; -i) \big] \, \frac{w^{n}}{n!}.
   \end{array}
\]
Hence, we set
\[
    2 \aR_{n} = \big[ \widehat{\PP}_{n}(b; i) + \widehat{\PP}_{n}(b; -i) \big] \quad \mbox{and} \quad i 2 \bR_{n} = \big[ \widehat{\PP}_{n}(b; i) - \widehat{\PP}_{n}(b; -i) \big], \quad n \geq 0. 
\]
Clearly, $\aR_0 = 1$ and $\bR_0 = 0$. To obtain the recurrence formula for $\{\aR_{n}\}_{n \geq 0}$ and $\{\bR_{n}\}_{n \geq 0}$, we observe from  \eqref{Eq-TTRR-for-CRR-polys} and  \eqref{Eq-Monic-CRR-Polys} that $\widehat{\PP}_{n}(b;i) = \overline{\widehat{\PP}_{n}(b;-i)} = \frac{{2}^{n} i^{n} (b)_{n}}{(2\lambda)_{n}}$, $n \geq 0$. Hence, 
\[
   2\aR_{n} = \frac{2^n}{(2\lambda)_{n}} \big[(i)^{n}(b)_{n} + (-i)^{n}(\overline{b})_{n}\big] \quad \mbox{and} \quad  i2\bR_{n} = \frac{2^n}{(2\lambda)_{n}} \big[(i)^{n}(b)_{n} - (-i)^{n}(\overline{b})_{n}\big], 
\]
for $n \geq 1$. Hence, all one needs to verify is that 
if 
\[
   2 \tilde{\aR}_{n} = \big[(i)^{n}(b)_{n} + (-i)^{n}(\overline{b})_{n}\big] \quad \mbox{and} \quad i2\tilde{\bR}_{n} = \big[(i)^{n}(b)_{n} - (-i)^{n}(\overline{b})_{n}\big],
\]
then there hold
\[
  \begin{array}l
    \tilde{\aR}_{n+1} = -\eta \tilde{\aR}_{n} - (\lambda+n) \tilde{\bR}_{n} \quad \mbox{and} \quad 
    \tilde{\bR}_{n+1} = -\eta \tilde{\bR}_{n} + (\lambda +n) \tilde{\aR}_{n}, \quad \mbox{for} \quad n \geq 0.
  \end{array}
\]
This is easily verified with the  substitutions $\eta = [(b+n) - (\overline{b} + n)]/(2i)$ and  $(\lambda + n) = [(b+n) + (\overline{b} + n)]/2$. This completes the proof of the formulas in \eqref{Eq-Sine-Cosine-G}. 

Now to prove the latter part of the theorem we just multiply the first formula in \eqref{Eq-Sine-Cosine-G} by $\cos(w)$ and the second formula in \eqref{Eq-Sine-Cosine-G} by $\sin(w)$, and add the resulting formulas. \hfill  
\end{proof} 

In the case of the Coulomb wave function  Theorem \ref{Thm-Appell-expansion-2} becomes: 

\begin{coro} \label{Coro-Coulomb-Expansion2} For $L \geq 0$, let  the real sequences $\{\aR_{n}\}_{n \geq 0}$ and $\{\bR_{n}\}_{n \geq 0}$ be given by $\aR_{0} = 1$, $\bR_{0} = 0$ and 
\[
    \left[ \begin{array}l
       \aR_{n+1} \\[1ex]
       \bR_{n+1}
       \end{array} \right]
     = \frac{2}{2L+n+2} 
    \left[ \begin{array}{cc}
       \eta & (L+n+1) \\[1ex]
       -(L+n+1) & \eta
       \end{array} \right]
    \left[ \begin{array}l
       \aR_{n} \\[1ex]
       \bR_{n}
       \end{array} \right], \quad n \geq 0,
\]
Then
\begin{equation} \label{Eq-Sine-Cosine-F}
  \begin{array}l
   \dsp \cos(w)\, F_{L}(\eta,w) = C_{L}(\eta)\, w^{L+1} \sum_{n=0}^{\infty} \aR_{n}\, \frac{w^n}{n!}, \\[3ex]
   \dsp  \sin(w)\, F_{L}(\eta,w)  = C_{L}(\eta)\, w^{L+2}\sum_{n=0}^{\infty} \frac{1}{n+1}\bR_{n+1}\, \frac{w^n}{n!},
  \end{array}
\end{equation}
where $C_{L}$ is as in Corollary \ref{Coro-Coulomb-Expansion}. Moreover, if $\{\cR_{n}(w)\}_{n \geq 0}$ is such that 
\[
     \cR_{n}(w)  = \aR_{n}\cos(w) + \frac{\bR_{n+1}}{n+1}\, w \sin(w), \quad n \geq 0, \]
then 
\[
    F_{L}(\eta,w) = C_{L}(\eta)\, w^{L+1}  \sum_{n=0}^{\infty} \cR_{n}(w)\, \frac{w^n}{n!}.
\]

\end{coro}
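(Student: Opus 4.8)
The plan is to obtain Corollary \ref{Coro-Coulomb-Expansion2} as a direct specialization of Theorem \ref{Thm-Appell-expansion-2}, combined with the connection \eqref{Eq-Connection-Coulomb} that identifies $\FFb(L+1-i\eta;w)$ with the regular Coulomb wave function $F_L(\eta,w)$. Essentially no new computation is needed; the only care required is to track a single sign convention, so I would organize the argument around that bookkeeping.

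First I would set $b = L+1-i\eta$ in Theorem \ref{Thm-Appell-expansion-2}. With this choice the real part is $\lambda = L+1 > 0$, so the hypotheses are met, but the imaginary part of $b$ equals $-\eta$ rather than $+\eta$. Consequently, in the matrix recurrence of Theorem \ref{Thm-Appell-expansion-2}, where the symbol $\eta$ stands for $\Im(b)$, each occurrence of $\eta$ is replaced by $-\eta$ and each occurrence of $\lambda$ by $L+1$. I would then check that the resulting transition matrix
\[
   \frac{2}{2(L+1)+n}
   \left[\begin{array}{cc} -(-\eta) & (L+1+n) \\ -(L+1+n) & -(-\eta) \end{array}\right]
   = \frac{2}{2L+n+2}
   \left[\begin{array}{cc} \eta & (L+n+1) \\ -(L+n+1) & \eta \end{array}\right]
\]
coincides exactly with the matrix defining $\{\aR_n\}_{n\geq 0}$ and $\{\bR_n\}_{n\geq 0}$ in the corollary, with the same initial data $\aR_0 = 1$ and $\bR_0 = 0$. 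This identifies the two pairs of sequences.

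Next I would rewrite \eqref{Eq-Connection-Coulomb} as $\FFb(L+1-i\eta;w) = F_L(\eta,w)/\big(C_L(\eta)\,w^{L+1}\big)$ and substitute it into the three expansions of \eqref{Eq-Sine-Cosine-G} at $b = L+1-i\eta$. Multiplying each identity through by $C_L(\eta)\,w^{L+1}$ converts $\cos(w)\FFb(b;w)$ into $\cos(w)F_L(\eta,w)$, giving the first line of \eqref{Eq-Sine-Cosine-F}; the sine identity acquires one extra factor of $w$ from the factor already present on the right-hand side of the sine formula in \eqref{Eq-Sine-Cosine-G}, producing the prefactor $C_L(\eta)\,w^{L+2}$ in the second line; and the combined $\cR_n(w)$ expansion transforms in the same way to yield the final display.

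The argument presents no genuine obstacle, since both the matrix recurrence and the three generating-function identities transfer verbatim once the substitution is made. The one delicate point---and the only place an error could enter---is the sign convention: in Theorem \ref{Thm-Appell-expansion-2} the symbol $\eta$ denotes $\Im(b)$, whereas in the Coulomb setting the standard parameter $\eta$ equals $-\Im(b)$. Matching the off-diagonal sign pattern of the corollary's matrix against the theorem under this flip is the step I would check most carefully; once it is confirmed, the three expansions follow at once.
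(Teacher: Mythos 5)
Your proposal is correct and matches the paper's route exactly: the paper presents this corollary as an immediate specialization of Theorem \ref{Thm-Appell-expansion-2} at $b = L+1-i\eta$ combined with \eqref{Eq-Connection-Coulomb}, giving no separate proof, and you carry out precisely that specialization. Your careful tracking of the sign flip $\Im(b) = -\eta$, which turns the theorem's matrix entries $-\eta$ into the corollary's $+\eta$, is exactly the one nontrivial bookkeeping point, and you resolve it correctly.
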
 

When $\eta = 0$, the formulas for $\aR_n$ and $\bR_n$ in Theorem \ref{Thm-Appell-expansion-2} is much simpler. It is easily verified that 
\[
   \aR_1 =  \bR_0 = 0, \quad \bR_{1} = \frac{2}{2\lambda}(-\lambda) \aR_{0} = -1. 
\]
Hence, by taking $\aR_n = \aR_{n}^{(\lambda-1/2, 0)}$ and $\bR_n = \bR_{n}^{(\lambda-1/2, 0)}$ we can state:   

\begin{coro} \label{Coro-Bessel-Expansion2} For $\alpha > -1/2$, let 
\[
      \aR_{2n} = (-1)^n\frac{2^{2n}(\alpha+1/2)_{2n}}{(2\alpha+1)_{2n}}, \quad \bR_{2n+1} = (-1)^{n+1}\frac{2^{2n}(\alpha+3/2)_{2n}}{(2\alpha+2)_{2n}}, \quad n \geq 0. 
\]
Then
\begin{equation} \label{Eq-Sine-Cosine-J}
  \begin{array}l
   \dsp \cos(w)\, J_{\alpha}(w) = \frac{1}{\Gamma(\alpha+1)} \big(\frac{w}{2}\big)^{\alpha}\sum_{n=0}^{\infty} \aR_{2n}\, \frac{w^{2n}}{(2n)!}, \\[3ex]
   \dsp  \sin(w)\, J_{\alpha}(w) = \frac{w}{\Gamma(\alpha+1)} \big(\frac{w}{2}\big)^{\alpha}  \sum_{n=0}^{\infty} \frac{1}{2n+1}\bR_{2n+1}\, \frac{w^{2n}}{(2n)!}.
  \end{array}
\end{equation}
Moreover, if $\cR_{2n}(w) = \aR_{2n}\cos(w) + \frac{1}{2n+1}\bR_{2n+1}\, w \sin(w)$, $n \geq 0$, then  
\[
    J_{\alpha}(w) = \frac{1}{\Gamma(\alpha+1)} \big(\frac{w}{2}\big)^{\alpha} \sum_{n=0}^{\infty} \cR_{2n}(w)\, \frac{w^{2n}}{(2n)!}.
\]
\end{coro}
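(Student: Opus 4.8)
The plan is to read off this corollary as the specialization $\eta = 0$, $\lambda = \alpha + 1/2$ of Theorem~\ref{Thm-Appell-expansion-2}, combined with the identification of $\FFb(\lambda;\cdot)$ with a Bessel function recorded just above Corollary~\ref{Coro-Bessel-Expansion}. Setting $\eta = 0$ in the defining recursion of Theorem~\ref{Thm-Appell-expansion-2} removes the diagonal and leaves
\[
   \aR_{n+1} = \frac{2(\lambda+n)}{2\lambda+n}\,\bR_{n}, \qquad \bR_{n+1} = -\frac{2(\lambda+n)}{2\lambda+n}\,\aR_{n}, \quad n \geq 0.
\]
Since $\aR_0 = 1$ and $\bR_0 = 0$ force $\aR_1 = 0$, the recursion decouples: $\aR_n = 0$ for odd $n$ and $\bR_n = 0$ for even $n$, so only $\aR_{2n}$ and $\bR_{2n+1}$ are nonzero, which are exactly the quantities appearing in the statement.

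First I would establish the stated closed forms for $\aR_{2n}$ and $\bR_{2n+1}$. Composing two consecutive steps gives $\aR_{2n+2} = -\frac{4(\lambda+2n)(\lambda+2n+1)}{(2\lambda+2n)(2\lambda+2n+1)}\,\aR_{2n}$, together with the single-step link $\bR_{2n+1} = -\frac{2(\lambda+2n)}{2\lambda+2n}\,\aR_{2n}$. Writing $\lambda = \alpha + 1/2$ so that $2\lambda = 2\alpha+1$, a short induction from the base values $\aR_0 = 1$, $\bR_1 = -1$ (both recorded just before the statement) confirms the claimed Pochhammer expressions. The only bookkeeping is the collapse of the ratios $(\lambda+1)_{2n}/(\lambda)_{2n} = (\lambda+2n)/\lambda$ and $(2\lambda+1)_{2n}/(2\lambda)_{2n} = (\lambda+n)/\lambda$, which reproduce precisely the coefficients of the recursion; this is the only mildly delicate point, and it is routine.

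Finally I would substitute into the two expansions of Theorem~\ref{Thm-Appell-expansion-2}. With $\lambda = \alpha + 1/2$ the relation $\FFb(\lambda;w) = \Gamma(\lambda+1/2)(w/2)^{-\lambda+1/2}J_{\lambda-1/2}(w)$ becomes $\FFb(\alpha+1/2;w) = \Gamma(\alpha+1)(w/2)^{-\alpha}J_\alpha(w)$. Inserting this into $\cos(w)\,\FFb(b;w) = \sum_n \aR_n w^n/n!$ and $\sin(w)\,\FFb(b;w) = w\sum_n \frac{1}{n+1}\bR_{n+1} w^n/n!$, then dropping the identically zero terms and dividing through by $\Gamma(\alpha+1)(w/2)^{-\alpha}$, yields the two displays in \eqref{Eq-Sine-Cosine-J}; the entirety of both sides, noted in the Remark following Theorem~\ref{Thm-Appell-Expansion}, guarantees that these term-by-term manipulations are valid for all $w$. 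The combined formula for $J_\alpha(w)$ then follows by multiplying the cosine identity by $\cos w$, the sine identity by $\sin w$, and adding, exactly as in the last line of the proof of Theorem~\ref{Thm-Appell-expansion-2}.
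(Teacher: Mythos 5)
Your proposal is correct and takes essentially the same route as the paper: specialize Theorem \ref{Thm-Appell-expansion-2} to $\eta=0$ with $\lambda=\alpha+1/2$, note that $\aR_1=\bR_0=0$ and $\bR_1=-1$ decouple the recursion by parity, and insert the identification $\FFb(\alpha+1/2;w)=\Gamma(\alpha+1)(w/2)^{-\alpha}J_{\alpha}(w)$ recorded just before Corollary \ref{Coro-Bessel-Expansion}. The paper dismisses the closed forms for $\aR_{2n}$ and $\bR_{2n+1}$ as ``easily verified''; your induction via the two-step ratio and the Pochhammer collapses simply supplies that routine detail, and your verification is accurate.
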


%%%%%%%%%%%%%%%%%%%%%%%%%%%%%%%%%%%%%%%%%%%%%%%%%%
\setcounter{equation}{0}

%%%%%%%%%%%%%%%%%%%%%%%%%%%%%%%%%%%%%%%%%%%%%%%%%%

\end{document}